\newcommand{\Hdel}{\mathcal{H}_\Delta}
\newcommand{\Hdelr}{\mathcal{H}_{0,\Delta}}
\newcommand{\Hdelc}{\overline{\mathcal{H}_\Delta}}
\newcommand{\Mtrop}{\MM^{\text{trop}}_{g,n}}
\newcommand{\PP}{\mathbb{P}}
\newcommand{\AAA}{\mathcal{A}}
\newcommand{\SSS}{\mathcal{S}}
\newcommand{\CC}{\mathbb{C}}
\newcommand{\Mesh}{\mathcal{C}}
\newcommand{\CP}{\mathbb{C}\mathbb{P}}
\newcommand{\RP}{\mathbb{R}\mathbb{P}}
\newcommand{\LL}{\mathcal{L}}
\newcommand{\NN}{\mathbb{N}}
\newcommand{\MM}{\mathcal{M}}
\newcommand{\ZZ}{\mathbb{Z}}
\newcommand{\RR}{\mathbb{R}}
\newcommand{\cG}{G}
\newcommand{\Sec}{\operatorname{Sec}}
\newcommand{\homm}{\operatorname{Hom}}
\newcommand{\Log}{\operatorname{Log}}
\newcommand{\inti}{\operatorname{int}}
\newcommand{\cone}{\operatorname{cone}}
\newcommand{\comp}{E}
\newcommand{\Conv}{\operatorname{conv}}
\newcommand{\area}{\operatorname{area}}
\newcommand{\enen}{\enspace | \enspace}
\newcommand{\pgl}{\text{PSL}_2(\RR)}
\theoremstyle{plain}
\newtheorem{theorem*}{Theorem}
\newtheorem{theorem}{Theorem}[section]
\newtheorem{proposition}[theorem]{Proposition}
\newtheorem{lemma}[theorem]{Lemma}
\newtheorem{corollary}[theorem]{Corollary}
\newtheorem{question}[theorem]{Question}
\newtheorem{conjecture}[theorem]{Conjecture}
\newcommand{\spine}{\Upsilon}
\theoremstyle{definition}
\newtheorem{definition}[theorem]{Definition}
\newtheorem{example}[theorem]{Example}
\theoremstyle{remark}
\newtheorem*{remark}{Remark}
\title{The Moduli Space of Harnack Curves in Toric Surfaces}
\author{Jorge Alberto Olarte}
\thanks{RWTH Aachen University, \textbf{email:} olarte@mathb.rwth-aachen.de}
\begin{document}

\begin{abstract}
In 2006, Kenyon and Okounkov computed the moduli space of Harnack curves of degree $d$ in $\CP^2$. We generalize to any projective toric surface some of the techniques used there. More precisely, we show that the moduli space $\Hdel$ of Harnack curves with Newton polygon $\Delta$ is diffeomorphic to $\RR^{m-3}\times\RR_{\geq0}^{n+g-m}$ where $\Delta$ has $m$ edges, $g$ interior lattice points and $n$ boundary lattice points, solving a conjecture of Cr\'etois and Lang (2018). Additionally, we use abstract tropical curves to construct a compactification of this moduli space by adding points that correspond to collections of curves that can be patchworked together to produce a curve in $\Hdel$. This compactification comes with a natural stratification with the same poset as the secondary polytope of $\Delta$.
\end{abstract}
\maketitle



\section{Introduction}

\begingroup
\renewcommand\thefootnote{}\footnote{The author was supported by the Einstein Foundation Berlin through Francisco Santos' Visiting Professor Fellowship at Freie Universit\"at Berlin.}
\renewcommand\thefootnote{}\footnote{{\bf MSC-2010:} Primary 14H50; Secondary 14H10, 14M25, 52B20,	14T05.}
\renewcommand\thefootnote{}\footnote{{\bf Keywords}: Harnack curves, amoebas, toric varieties, moduli spaces, secondary polytopes, tropical curves.}
\endgroup
Harnack curves are real algebraic plane curves inside a projective toric surface, introduced by Mikhalkin \cite{mikhalkin2000real}, with several remarkable properties. By definition, they have the maximum possible number of connected components for a given Newton polygon and these components are arranged in a unique particular way  (see Definition \ref{def:harnack}). Their amoebas 
 are particularly special, since they are precisely the ones with maximal area \cite{mikhalkin2001amoebas}.  
Because of this, they have found applications in physics, where the dimer model is used to study crystal surfaces (see \cite{kenyon2006dimers} for details). In this model, the limit of the shape of a crystal surface is given by the amoeba of a Harnack curve. 

These curves are named after Axel Harnack, who constructed them in the projective plane to show that his upper bound on the number of connected components of plane real algebraic curve is attained \cite{harnack1876ueber}. In the projective plane, the space of Harnack curves of degree $d$ modulo the action of the torus $(\CC^*)^2 \subseteq \CP^2$ was studied by Kenyon and Okounkov \cite{kenyon2006planar} to better understand the dimer model. Equivalently, this is the space of amoebas of Harnack curves modulo translation. They show that this moduli space has global coordinates given by the areas of holes of the amoeba and the distances between consecutive tentacles. Therefore it is diffeomorphic to $\RR_{\geq 0}^{(d+4)(d-1)/2}$. Cr\'etois and Lang \cite{cretois2017vanishing} generalized some of the techniques used in \cite{kenyon2006planar} to Harnack curves in any projective toric surface. They showed that given a lattice polygon $\Delta$, the moduli space $\Hdel$ of Harnack curves with Newton polygon $\Delta$ is path connected and they conjectured that it is also contractible \footnote{The conjecture appears as Remark 4.4 in the preprint version of \cite{cretois2017vanishing}; the published version already cites our results}. We confirm this belief and further generalize the results of \cite{kenyon2006planar} to compute $\Hdel$:

%

\begin{theorem*}
\label{thm1}
Let $\Delta$ be a lattice $m$-gon with $g$ interior lattice points and $n$ boundary lattice points. Then the moduli space $\Hdel$ of Harnack curves of Newton polytope $\Delta$ is diffeomorphic to $\RR^{m-3}\times\RR_{\geq0}^{n+g-m}$.
\end{theorem*}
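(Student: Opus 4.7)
The plan is to construct explicit global coordinates on $\Hdel$ by splitting a Harnack curve's data into boundary and interior parts. Each of the $m$ edges $e_i$ of $\Delta$, of lattice length $\ell_i$ with $\sum \ell_i = n$, corresponds to a toric divisor $D_i \cong \CC^*$ of $X_\Delta$, and the restriction of the defining polynomial to $D_i$ is a univariate polynomial of degree $\ell_i$; a defining property of a Harnack curve is that this restriction has $\ell_i$ simple positive real roots. On each $D_i$ we therefore obtain an ordered $\ell_i$-tuple of real intercepts. Dually, by Mikhalkin's classification the $g$ interior lattice points of $\Delta$ correspond bijectively to the bounded ovals of the real part of the curve, and to each such oval we attach its area, measured for instance as the area of the corresponding bounded complementary component of the amoeba, a non-negative real number that vanishes exactly when the oval degenerates to a solitary real node.

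Reparametrizing the intercepts on each edge as (smallest intercept, successive gaps) produces one free real coordinate and $\ell_i - 1$ non-negative coordinates per edge, for $m$ free and $n - m$ non-negative boundary coordinates in total; adjoining the $g$ oval areas gives $n + g - m$ non-negative coordinates altogether. The positive real torus $(\RR_{>0})^2$ together with the one-dimensional rescaling of the defining polynomial removes exactly three of the $m$ free coordinates, so the intended diffeomorphism sends $\Hdel$ onto $\RR^{m-3} \times \RR_{\geq 0}^{n + g - m}$. The dimensions match: $\Hdel$ has real dimension $(n + g) - 3 = (m - 3) + (n + g - m)$.

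To check that this coordinate map is a diffeomorphism, two ingredients are needed. Injectivity is a rigidity statement: up to the torus action, a Harnack curve is determined by its boundary intercepts and its oval areas, which follows from convex-analytic properties of the Ronkin function of a Harnack polynomial, since the amoeba is governed by its asymptotic behavior at infinity and by its holes. Surjectivity and smoothness can be approached using the connectedness of $\Hdel$ proved by Cr\'etois and Lang: starting from a single nondegenerate Harnack curve, one constructs explicit deformations along each of the $n + g - 3$ coordinate directions, and then an implicit function argument applied to the intercept equations shows that the coordinate map is a local diffeomorphism at every interior point.

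The main obstacle, I expect, lies on the boundary strata $\RR^{m-3} \times \partial \RR_{\geq 0}^{n + g - m}$, where either two consecutive intercepts on some edge collide (forcing the curve to become tangent to the corresponding toric divisor at a boundary lattice point) or an oval contracts to a real node. These are precisely the allowed Harnack degenerations, but one must verify that every prescribed pattern of collisions and contractions is independently realized by some Harnack curve, with no unexpected coupling through the vertices of $\Delta$, and that the inverse map extends smoothly to these corners. It is this step where the generalization from the $\CP^2$ case of Kenyon--Okounkov becomes substantial, since for non-unimodular $\Delta$ the interaction between intercepts across shared vertices is genuinely richer and must be controlled edge-by-edge.
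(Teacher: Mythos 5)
Your high-level decomposition — boundary data on the toric divisors together with oval areas, then quotient by the torus — is exactly the one the paper uses, so the strategy is sound. But the argument as written leaves the technical heart of the theorem as a gap, and the way you propose to fill it would not work.

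The crux is proving that the map from a Harnack curve to its boundary intercepts (equivalently, the tentacle positions of the amoeba) is a diffeomorphism onto its image. You propose ``connectedness of $\Hdel$ plus an implicit function argument.'' This is insufficient for two reasons. First, the implicit function theorem requires knowing that the differential of the intercept map has full rank $n-3$; this is precisely Proposition~\ref{rank_prop}, and it is the substantial piece of the proof. The paper establishes it by writing the Jacobian of the tentacle-position map $\rho$ as a positive sum of rank-one positive-semidefinite $3\times 3$ blocks (the $T$-decomposition of Proposition~\ref{decomp_prop}), which is achieved by reducing arbitrary $\Delta$ to the dilated-unit-triangle case via the dilation and vertex-cutting operations of Section~3; this is exactly the ``interaction between intercepts across shared vertices'' that you flag as richer than in the $\CP^2$ case, but you do not control it. Second, even granting full rank everywhere, ``local diffeomorphism from a connected manifold'' does not give a global diffeomorphism onto the image — a covering map is a counterexample. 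The paper needs the separate properness statement (Proposition~\ref{properpropo}), whose proof is a delicate estimate showing that when root parameters collide the corresponding $\rho$-coordinates diverge, so preimages of compact sets stay bounded.

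A smaller but real slip: rescaling the defining polynomial does not move any intercept (the vanishing locus is scale-invariant), so your accounting ``torus $(\RR_{>0})^2$ plus rescaling kills three of the $m$ free coordinates'' is wrong as stated. In the paper's parametrization the $-3$ comes from the $\pgl$-reparametrization of the rational normalization $\CP^1 \to C$ (Proposition~\ref{rationalmoduli}); in intercept coordinates it comes from the two-dimensional torus action plus one multiplicative relation among the intercepts imposed by their coming from a single polynomial. The dimension count is right, but for the wrong reason. Finally, your ``main obstacle'' diagnosis is misplaced: the boundary strata (collisions and contractions) are comparatively easy, handled by the closedness of the Harnack condition (Proposition~\ref{closedharnack}); the genuinely new work relative to Kenyon--Okounkov is the $T$-decomposition and the properness estimate in the interior.
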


The interior of $\Hdel$ corresponds to the smooth Harnack curves with transversal intersections with the axes of $X_\Delta$. In the boundary, ovals may contract to double points, or the curves may not intersect with the axes transversally.

We further show that $\Hdel$ admits a compactification similar in spirit to the Deligne-Mumford compactification of $\mathcal{M}_{g,n}$. This compactification consists of what we call \emph{Harnack meshes}. A Haranck mesh (see \Cref{defn:mesh}) consists of a regular subdivision of $\Delta$ and a Harnack curve with Newton polytope $\Delta_i$ for each facet $\Delta_i$ of the subdivision, with some gluing conditions that allow for the curves to be patchworked (using Viro's method, see \cite{viro2006patchworking}) to produce a curve in $\Hdel$. The space of Harnack meshes is naturally stratified into cells according to which regular subdivision is used in the patchworking recipe. The above can be summed up in the following: 

%

\begin{theorem*}
\label{thm2}
The space $\Hdel$ has a compactification $\Hdelc$ consisting of all Harnack meshes over $\Delta$. Moreover, $\Hdelc$ has a cell complex structure whose poset is isomorphic to the face poset of the secondary polytope $\Sec(\Delta\cap\ZZ^2)$. 
\end{theorem*}

The structure of the paper is as follows.
In section 2, we set notation and recall some background results on Harnack curves. Sections 3 and 4 are dedicated to proving \Cref{thm1} (\Cref{modulitheorem}) and section 5 is dedicated to prove \Cref{thm2} (\Cref{thm_compact}).

Most of the proofs consist in showing that there are different parameters that can be taken as global coordinates for Harnack curves. In section 3 we consider the following diagram:
\[
\begin{Bmatrix}
\text{Rational}\\
\text{Harnack}\\
\text{curves}
\end{Bmatrix} \hookrightarrow
\begin{Bmatrix}
\text{Roots of}\\
\text{rational}\\
\text{parametrization}
\end{Bmatrix} /\pgl
\stackrel{\tilde\rho}{\rightarrow}
\begin{Bmatrix}
\text{Positions of}\\
\text{amoeba}\\
\text{tentacles}
\end{Bmatrix} /\RR^2
\]
In the left we have the moduli space of rational Harnack curves, which we denote $\Hdelr$; in the middle we have parametrizations $\phi : \CP^1\to X_\Delta$ of Harnack curves modulo the action of $\pgl$ on $\CP^1$; and in the right we have the positions of the tentacles of the amoeba modulo translations of the amoeba. The main result of section 3 is that the map $\tilde\rho$ is a smooth embedding when restricted to the image of the first map.

In section 4 we show that the following are diffeomorphisms:
\begin{equation}
\begin{Bmatrix}
\text{Harnack curves}\\
\text{with fixed}\\
\text{tentacle positions}\\
\end{Bmatrix} \leftrightarrow
\begin{Bmatrix}
\text{Bounded}\\
\text{Ronkin}\\
\text{intercepts}
\end{Bmatrix} 
\leftrightarrow
\begin{Bmatrix}
\text{Areas of}\\
\text{holes of}\\
\text{the 	amoeba}
\end{Bmatrix}
\label{eq:fixed_boundary}
\end{equation}
By putting together the two diagrams above we have:
\[
\Hdel
\hookrightarrow
\begin{matrix}
\{\text{Tentacle positions}\}
/\RR^2 \\
\times\{\text{Bounded intercepts}\}
\end{matrix} 
\to
\begin{Bmatrix}\text{All Ronkin}\\
\text{intercepts}
\end{Bmatrix}
/\RR^3
\to
\Mtrop
\]
where the $\RR^3$ action in the the third space refers to translations of the graph of the Ronkin function. The first map is a smooth embedding by putting together the two previous diagrams.

In section 5 we look at the last two maps. The second map is a linear bijection between tentacle positions and unbounded Ronkin intercepts. The last map is given by what we call the \emph{expanded spine}.
Since $\Mtrop$ is not a manifold (it is a tropical variety), this map can no longer be a diffeomorphism. However we show that it is a piecewise linear embedding.
We also show how Harnack meshes can be similarly embedded into the closure of the embedding $\Hdel \hookrightarrow \Mtrop$, allowing us to construct the compactification $\Hdelc$. 

We end the paper by suggesting some directions for future research in section 6. In particular we conjecture $\Hdelc$ to be a CW-complex and we suggest a possible smooth structure on $\Hdelc$ as a manifold with generalized corners (see \cite{gcmanifolds}).

\subsection*{Acknowledgements}
The author would like to thank Mauricio Velasco for proposing this problem and advising the master thesis that preceded this paper as well as the rest of the committee members Tristram Bogart, Felipe Rinc\'on, Florent Schaffhauser; the organizers of the Master Class in Tropical Geometry in Stockholm 2017, where much of the inspiration for section 5 was obtained; Grigory Mikhalkin, Timo de Wolff, Lionel Lang and anonymous referees for helpful discussions and suggestions. Special thanks to Francisco Santos for all of his help and support with the development of this paper.

\section{Preliminaries}
\subsection{Notation}
We fix the following notation for the rest of the paper. As is usual in toric geometry, $M \cong N \cong \ZZ^2$ are the lattices of characters and one-parametric subgroups of the algebraic two-dimensional torus $(\CC^*)^2$ respectively. See \cite{cox2011toric} as a general reference for toric varieties.

Let $\Delta\subset M \otimes \RR$ be a convex lattice polygon. We write $\partial \Delta$ for the boundary of $\Delta$, $\inti(\Delta)$ for the interior of $\Delta$. We write $\Delta_M$ for the lattice points in $\Delta$, that is, $\Delta_M = \Delta \cap M$.
We use $n$ and $g$ to denote the number of lattice points in $\partial\Delta$ and $\inti(\Delta)$, respectively, and $m$ for the number of edges of $\Delta$. 
For any positive integer $k$, $[k]$ denotes the set $\{1,\dots,k\}$.
We denote by $\Gamma_i$, $i=1,\dots,m$, the edges of $\Delta$ in cyclic anticlockwise order. 
Let $d_1,\dots, d_m$ be their respective integer lengths (i.e. $d_i = |\Gamma_i\cap M| -1$). Let $u_i \in N$ be the primitive inner normal vector of $\Gamma_i$.
We have the following equation:
\begin{equation}
\sum\limits_{i=1}^m d_ku_i = 0
\label{eq:balancing}
\end{equation}

To each $v=(v_1,v_2)\in M$ there is an associated Laurent monomial $x^v := x_1^{v_1} x_2^{v_2}$. The \emph{Newton polygon} of a Laurent polynomial $f(x) = \sum \limits_{v\in M} c_vx^v$ is the convex hull of $\{v\in M \enen c_v \ne 0\}$. For any subset $\Delta' \subseteq \Delta$ we write $f|_{\Delta'}(x) :=  \sum \limits_{v\in \Delta_M'} c_vx^v$.

Given a lattice polygon $\Delta$ there is an associated projective toric surface $X_\Delta$ whose geometry reflects the combinatorics of $\Delta$. It contains a dense copy of the torus $(\CC^*)^2$ where coordinate-wise multiplication extends to an action on all of $X_\Delta$. For each edge $\Gamma_i$ of $\Delta$, there is a corresponding irreducible divisor $L_i$ in $X_\Delta$ which is invariant under the action of the torus. We call these divisors the \emph{axes} of $X_\Delta$. Two axes intersect in a point if and only if they correspond to consecutive edges of $\Delta$. We denote the real part of $X_\Delta$ as $\RR X_\Delta$.



\subsection{Harnack Curves}

Let $f$ be a Laurent polynomial with real coefficients and Newton polygon $\Delta$.
The zeros of $f$ define a curve $C^\circ \subset (\CC^*)^2$. The closure of $C^\circ$ in $X_\Delta$ is a compact algebraic curve $C$. If $C$ is smooth its genus is equal to $g$ \cite{khovanskii1978newton}. The intersection of $C$ with $\RR X_\Delta$ is a real algebraic curve $\RR C$. The intersection of $C$ with an axis $L_i$ is given by the restriction of $f$ to $\Gamma_i$, which is, after a suitable change of variable, a polynomial of degree $d_i$. Therefore $L_i \cap C$ consists of exactly $d_i$ points counted with multiplicities. 

\begin{definition}{\cite[Definition 2]{mikhalkin2000real}}
\label{def:harnack}
Let $\Delta$ be a lattice polygon with $g, m$ and the $d_i$'s defined as above. A smooth real algebraic curve $\RR C\subseteq \RR X_\Delta$ is called a \emph{smooth Harnack curve} if the following conditions hold:
\begin{itemize}
	\item The number of connected components of $\RR C$ is $g +1$.
	\item Only one component of $\RR C$ intersects $L_1\cup\dots\cup L_m$. This component can be subdivided into $m$ disjoint arcs, $\theta_1 \dots \theta_m$, in that order, such that $C\cap L_i = \theta_i \cap L_i$.
\end{itemize}
The components that are disjoint from $L_1\cup\dots\cup L_m$ are called \emph{ovals}.
\end{definition}
Harnack curves were originally called in ``cyclically maximal position'' in \cite{mikhalkin2000real}.
In the literature these curves are sometimes called ``simple Harnack curves''. 
However, following \cite{mikhalkin2001amoebas,kenyon2006dimers,kenyon2006planar} we omit the adjective `simple' when referring to them (see \cite[Remark 6.6]{mikhalkin2007geometry}).

These curves are named after Axel Harnack because he showed in 1876 that smooth curves of genus $g$ in the real projective plane have at most $g+1$ connected components. To show that the bound was tight, he constructed the eponymous curves \cite{harnack1876ueber}. Curves which attain the maximum number of components are called $M$-curves. These are the topic of the first part of Hilbert's 16$^{th}$ problem, which asks to classify all possible topological types of $M$-curves. When $\RR X_\Delta = \RP^2$, Harnack curves are the $M$-curves such that only one component intersects the axes and it does so in order. 
Mikhalkin proved that, for any given $\Delta$, if $\RR C$ is a Harnack curve with Newton polygon $\Delta$ then the topological type of $(\RR X_\Delta, \RR C, \RR L_1\cup,\dots,\cup \RR L_n)$ is unique \cite[Theorem 3]{mikhalkin2000real}. 

Recall that a singular point in $\RR C$ is an ordinary isolated double point if it is locally isomorphic to the singularity of $x_1^2+x_2^2 =0$.
\begin{definition}{\cite[Definition 3]{mikhalkin2001amoebas}}
A (possibly singular) real algebraic curve $\RR C\subseteq X_\Delta$ is a \emph{Harnack curve} if 
\begin{itemize}
	\item The only singularities of $\RR C$ are ordinary isolated double points away from the torus invariant divisors.
	\item Replacing each singular point of $\RR C$ by a small oval around it yields a curve $\RR C'$ such that $(\RR X_\Delta, \RR C', \RR L_1\cup,\dots,\cup \RR L_n)$ has the topological type of smooth Harnack curves.
\end{itemize}
\end{definition}
Notice that any singular Harnack curve can be approximated by smooth Harnack curves. To see this, let $f$ be a polynomial that vanishes on $\RR C$ and let $g(x,y) := f(\lambda x, \lambda y)$ for a real number $\lambda$ close to 1 but different from 1, so that the singular points of $f$ and $g$ are close but do not coincide. Then $f-\epsilon g$ vanishes on a smooth Harnack curve which approaches $\RR C$ when $\epsilon$ tends to 0.

Let $\RR[\Delta_M]$ be the vector space of real polynomials with Newton polygon contained in $\Delta$. Since scaling all coefficients of $f$ by the same constant does not change the curve $\RR C$, we can identify the space of real curves with Newton polygon contained in $\Delta$ with $\PP(\RR[\Delta_M])$. The action of the torus $(\RR^*)^2$ on $\RR X_\Delta$ induces an action on $\PP(\RR[\Delta_M])$ given by $f(x_1,x_2) \mapsto f(r_1^{-1} x_1, r_2^{-1}x_2)$.

\begin{definition}
The \emph{moduli space $\Hdel$ of Harnack curves} is the subspace of $\PP(\RR[\Delta_M])/(\RR^*)^2$ consisting of all (possibly singular) Harnack curves with Newton polygon $\Delta$ modulo the action of $(\RR^*)^2$. 
\end{definition}
Given an element in $\RR C \in \Hdel$, we say that a polynomial vanishes on $\RR C$ if its zero locus is in the equivalence class given by $\RR C$.

\begin{remark}
The notation $\Hdel$ was used in \cite{cretois2017vanishing} to note the space of Harnack curves without taking them modulo the action of $(\RR^*)^2$. They defined it with a more algebro-geometric language as follows: the space of curves with Newton polygon contained in $\Delta$ can be identified with the complete linear system $|D_\Delta|$ of the Cartier divisor $D_\Delta$ of $X_\Delta$ associated to $\Delta$. Since $X_\Delta$ is a complete normal toric variety, $|D_\Delta|$ can be identified with the projectivization of the space of global sections of the line bundle associated to $\Delta$. Therefore $\Hdel$ can be defined as the subspace of $|D_\Delta|$ of Harnack curves, modulo the action of the torus $(\RR^*)^2$ on $\RR X_\Delta$.
\end{remark}

The case when $\Delta$ is the $d$-th dilation of the unimodular triangle corresponds to degree $d$ curves in $\RP^2$ and the moduli space is diffeomorphic to the closed orthant $\RR_{\ge 0}^{(d+4)(d-1)/2}$ \cite[Corollary 11]{kenyon2006planar}. 

\subsection{Amoebas and the Ronkin function}

Amoebas, which are essential to understand Harnack curves, were defined in \cite[Chapter 6]{gelfand2008discriminants} where details about them can be found.
\begin{definition}
Let $\Log: (\CC^*)^2 \rightarrow \RR^2$ be the map
\[
\Log (z_1,z_2) := (\log|z_1|, \log|z_2|)
\]
The \emph{amoeba} of an algeraic curve $C$ is $\mathcal{A}(C) := \Log(C^\circ)$. 
\end{definition}

The amoebas of Harnack curves are specially well-behaved:
\begin{proposition}{\cite{mikhalkin2001amoebas}}
Let $\RR C$ be a real algebraic curve with Newton polygon $\Delta$ and $\AAA= \AAA(C)$ its amoeba. The following are equivalent:
\begin{enumerate}	
	\item $\RR C$ is Harnack curve
	\item The map $\Log|_{C^\circ}$ is at most 2-to-1.
	\item $\area(\AAA) = \pi^2\area(\Delta)$
\end{enumerate}
\end{proposition}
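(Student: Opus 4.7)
The plan is to prove the cycle $(1)\Rightarrow(2)\Rightarrow(3)\Rightarrow(1)$, with the \emph{Ronkin function} $N_f\colon \RR^2 \to \RR$ --- the mean of $\log|f|$ over the real torus fiber $\Log^{-1}(x)$ --- as the central analytic tool. Recall that $N_f$ is convex, affine on each connected component of $\RR^2\setminus\AAA$ with integer gradients recovering the lattice points of $\Delta$, and its total Monge--Amp\`ere mass $\int_{\RR^2}\det\hess N_f$ equals $\area(\Delta)$. A Jensen-type computation identifies $\pi^{2}\det\hess N_f(x)$ at a regular value $x\in\inti(\AAA)$ with (essentially) $1/k(x)$, where $k(x):=|\Log^{-1}(x)\cap C^\circ|$. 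Integrating yields the Passare--Rullg\aa rd inequality $\area(\AAA)\leq \pi^2\area(\Delta)$ together with its equality characterization: equality holds iff $k(x)$ attains its minimum almost everywhere on $\AAA$.

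For $(2)\Leftrightarrow(3)$: complex conjugation $(z,w)\mapsto(\bar z,\bar w)$ is an involution of $C^\circ$ (since $f$ is real) that commutes with $\Log$, so every generic fiber over $\inti(\AAA)$ has $k(x)\geq 2$. Hence the Passare--Rullg\aa rd bound is saturated precisely when $k(x)\equiv 2$ on $\inti(\AAA)$, which is exactly condition (2).

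For $(1)\Rightarrow(2)$, assume $\RR C$ is Harnack. Complex conjugation pairs each non-real point of $C^\circ$ with a distinct conjugate sharing its $\Log$-image. The Harnack topological type (one component per oval plus one outer component meeting $L_1,\dots,L_m$ cyclically with the prescribed intersection pattern) provides exactly $g+1$ connected components covering the critical set of $\Log|_{C^\circ}$; a Riemann--Hurwitz / Euler-characteristic count on the normalization then rules out any additional sheet of $\Log$, giving $k(x)=2$ generically and hence the $2$-to-$1$ property.

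For $(3)\Rightarrow(1)$, which I expect to be the main obstacle, area saturation forces $k(x)=2$ on $\inti(\AAA)$ and, dually, pins the critical locus of $\Log|_{C^\circ}$ to $\RR C$, so $\RR C$ projects homeomorphically onto $\partial\AAA$. Using the Forsberg--Passare--Tsikh bijection between bounded components of $\RR^2\setminus\AAA$ and lattice points of $\inti(\Delta)$, this yields exactly $g$ ovals plus one component whose $\Log$-image traces the unbounded part of $\partial\AAA$; the cyclic incidences with the axes $L_j$ and the intersection multiplicities $d_j$ are then dictated by the tentacle structure of $\AAA$ prescribed by the sides $\Gamma_j$ of $\Delta$. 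The difficult part is precisely this last step: translating the metric equality $\area(\AAA)=\pi^2\area(\Delta)$ into the full topological classification of the triad $(X_\Delta,\RR C,L_1\cup\dots\cup L_m)$, which is the heart of Mikhalkin's original argument.
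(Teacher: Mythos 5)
The paper does not prove this proposition: it is quoted verbatim from Mikhalkin--Rullg\aa rd \cite{mikhalkin2001amoebas}, so there is no in-paper argument to compare yours against. Your sketch does follow the right circle of ideas from that reference (Ronkin function, Monge--Amp\`ere mass equals $\area(\Delta)$, the Passare--Rullg\aa rd estimate $\area(\AAA)\le\pi^2\area(\Delta)$ and its equality analysis, Mikhalkin's topological characterization from \cite{mikhalkin2000real}), and you are honest that $(3)\Rightarrow(1)$ is the heart of the matter and is left unproved. Two concrete issues, though.

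First, the pointwise claim that a Jensen-type computation yields $\pi^2\det\hess R_f(x)=1/k(x)$ at a regular value $x$, where $k(x)$ is the cardinality of the fiber, is not correct as stated and is not how the Passare--Rullg\aa rd inequality is obtained. Their estimate is a measure inequality $\mu_f \ge \pi^{-2}\,\mathrm{Leb}|_{\AAA}$ proved by expressing $\hess R_f(x)$ as a sum of positive semidefinite rank-one contributions (one per fiber point, coming from the local derivatives of $\Log$ along the curve) and then applying a Cauchy--Schwarz-type determinant bound; the determinant is bounded \emph{below} by $\pi^{-2}$, not identified with $\pi^{-2}/k(x)$. So the step ``equality $\Leftrightarrow$ $k(x)$ is minimal a.e.'' does not follow from the formula you wrote; it follows from analyzing when the Cauchy--Schwarz bound is tight, which forces the $k(x)$ rank-one summands to be pairwise proportional and, together with the real structure, forces $k(x)=2$.

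Second, and more importantly, the implication $(3)\Rightarrow(1)$ is the actual content of the theorem and your sketch does not close it. Knowing $k(x)\equiv 2$ on $\inti(\AAA)$ and that $\RR C$ maps to $\partial\AAA$ does not by itself produce the cyclic incidence pattern with $L_1,\dots,L_m$ with the prescribed multiplicities $d_j$, nor does it rule out a priori that $\RR C$ could have extra components or that the outer component could meet the axes in the wrong order. The Mikhalkin--Rullg\aa rd argument gets there by first showing that area-maximality forces the logarithmic Gauss map to be totally real (so the critical set of $\Log|_{C^\circ}$ is exactly $\RR C$), then using the index/order theory of the amoeba complement together with the Forsberg--Passare--Tsikh correspondence, and finally invoking Mikhalkin's uniqueness of the Harnack topological type. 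If you want a self-contained proof you will need to supply that chain explicitly; as written, the last paragraph is a statement of what must be shown rather than an argument.
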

For arbitrary curves, $\area(\AAA) \leq \pi^2\area(\Delta)$ \cite{passare2004amoebas}, so Harnack curves have the amoebas with maximal area. Smooth Harnack curves are also characterized by having maximal curvature, and by having totally real logarithmic Gauss map \cite{passare2010curvature,mikhalkin2000real}. However there are more general singular curves whose logarithmic Gauss map is also totally real \cite{lang2015generalization}.


Each connected component of the complement of an amoeba is convex and has a point in $\Delta_M$ naturally associated to it, as we now show.
Let $f: \RR^2 \rightarrow \RR$ be a Laurent polynomial. The \emph{Ronkin function} $R_f : \RR^2 \rightarrow \RR$ of $f$ defined in \cite{Ronkin}, is
\[
R_f(x) := \dfrac{1}{(2\pi \sqrt{-1})^2}\int\limits_{\Log^{-1}(x)}\dfrac{\log|f(z_1,z_2)|}{z_1z_2}dz_1 dz_2.
\]

The Ronkin function is convex, see \cite{passare2004amoebas}. Its gradient vector $\nabla R_f = (\nu_1,\nu_2)$ is given by
\[
\nu_i(x) = \dfrac{1}{(2\pi \sqrt{-1})^2}\int\limits_{\Log^{-1}(x)}\dfrac{z_i\partial_{z_i} f(z_1,z_2)}{z_1z_2f(z_1,z_2)}dz_1 dz_2.
\]

For any $x\in \RR^2$ we have that $\nabla R_f(x) \in \Delta$. If two points are in the same connected component of $\RR^2 \setminus \AAA$, then their preimages under $\Log$ are homologous cycles in $(\CC^*)^2\setminus C^\circ$. This implies that $\nabla R_f$ is constant in each component and it has integer coordinates by the residue theorem. Therefore $\nabla R_f(x)$ induces an injection from the components of $\RR^2 \setminus \AAA$ to $\Delta_M$. The value that $\nabla R_f$ takes in a component of $\RR^2 \setminus \AAA$ is called the \emph{order} of that component and we write $\comp_v$ for the component of order $v$ if it exists. For details of this construction see \cite{forsberg2000laurent}. 

To better understand amoebas, we review some facts about their behaviour, see \cite[Section 6.1]{gelfand2008discriminants}. The component $\comp_v$ is bounded if and only if $v$ is in the interior of $\Delta$.
For each vertex $v$ of $\Delta$, $\comp_v$ exists and contains a translation of $-\cone(u_i,u_{i+1})$ where $u_i$ and $u_{i+1}$ are the inner normal vectors of the edges adjacent to $v$. 
If $v$ is a lattice point in the relative interior of an edge $\Gamma_i$, $\comp_v$ is only unbounded in the direction $-u_i$.
Parts of the amoeba extend to infinity in between the unbounded components of $\RR^2 \setminus \AAA$, in direction $u_i$ for some $i$. These are called the \emph{tentacles} of the amoeba. Figure \ref{fig_amoeba} serves as an illustration of how typical amoebas of Harnack curves look like.



	
	
For each $v \in \Delta_M$ such that $\comp_v$ exists, let $F_v: \RR^2 \rightarrow \RR$ be the affine linear function that coincides with $R_f$ in $\comp_v$. 
The  \emph{spine} of a curve $C$ as defined in \cite{passare2004amoebas} is the corner locus of $\max F_v$ where $\max$ is taken over all $\comp_v$ that exist. Notice that scaling $f$ by a a constant only changes $R_f$ by an additive constant constant, so the spine of $C$ is well defined. 

The spine varies continuously for smooth curves. However, if $\comp_v$ vanishes for $v\in \inti(\Delta)$, then the spine changes abruptly. 
Fortunately, for Harnack curves there is an easy work around. By the definition of singular Harnack curves, for each $v\in \inti(\Delta)\cap M$ such that $\comp_v = \emptyset$ there is an isolated double point $p_v$ in $\RR C$ such that there is a smooth Harnack curve $\RR C'$ arbitrarily close to $\RR C$ with a component near $p_v$ with order $v$. Therefore $\nabla R_f(\Log(p_v))= v$. Let $F_v$ be the tangent plane of $R_f$ at $\Log(p_v)$.
%

\begin{definition}
\label{def_spine}
Let $\RR C$ be a Harnack curve. We call \emph{expanded spine} of $C$ and denote $\spine(C)$ the corner locus of the piecewise affine linear convex function $\max \limits_{v\in\Delta_M} F_v$.
\end{definition}

The expanded spine and the usual spine coincide if and only if $\RR C$ is a smooth Harnack curve. The expanded spine varies continuously for Harnack curves, even singular ones. It has a cycle for each $v\in \inti(\Delta)\cap M$. The bounded part of the expanded spine is a planar graph of genus $g$. This definition will be crucial in \Cref{sec:compact}.

\begin{figure}[H]
\centering	
	\hspace{5mm}
	  \begin{minipage}[b]{0.42\textwidth}
    \includegraphics[width=0.9\textwidth]{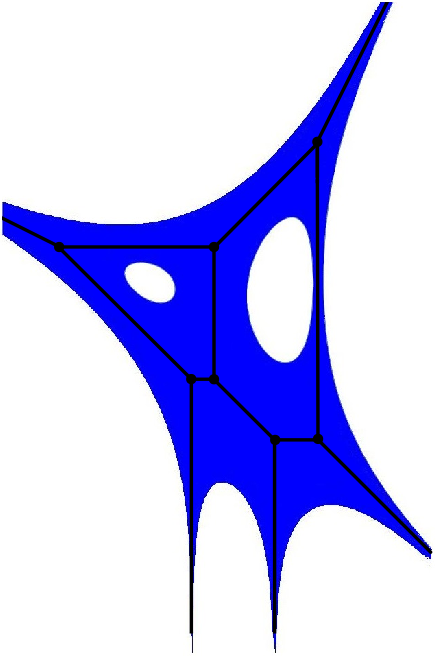}
	\end{minipage}
  \hfill
  \begin{minipage}[b]{0.45\textwidth}
	\scalebox{1}{
    \includegraphics[width=0.9\textwidth]{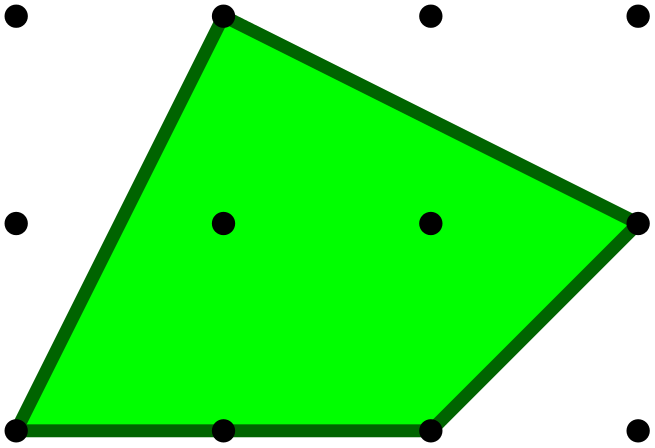}}
		\vspace{1.5cm}
  \end{minipage}
	\hspace{5mm}
	\caption{The amoeba (blue), the spine (black) and the Newton polygon (green) of a Harnack curve.}
	\label{fig_amoeba}
\end{figure}

By definition, the expanded spine is a tropical plane curve. The intercepts $c_v$ of the affine functions $F_v$ are the coefficients of the tropical polynomial that vanishes on the expanded spine, that is 
\[
\spine(C) = \operatorname{trop}(\bigoplus\limits_v c_v\odot x^{\odot v}).
\]
We call the numbers $c_v$ the \emph{Ronkin intercepts}. We call a Ronkin intercept $c_v$ \emph{bounded} if $v\in \inti (\Delta)$ and \emph{unbounded} if $v\in \partial \Delta$. In other words, we say $c_v$ is bounded if and only if whenever $E_v$ exists it is bounded.



\subsection{Patchworking}
We now give a basic overview of patchworking of real algebraic curves, a powerful tool to construct curves with a prescribed topology, developed by Viro, see \cite{viro2006patchworking} for details. 

Patchworking makes use of regular polyhedral subdivisions, which we now briefly review. We refer the reader to \cite{de2010triangulations} for more about regular subdivisions. Consider a function $h:\Delta_M \rightarrow \RR$, which we call a \emph{height function}. 
The $3$-dimensional polyhedron
\[P= \Conv(\{(v,t) \enen v \in \Delta_M \enspace  t\geq h(v)\})\] 
is only unbounded in the $(0,0,1)$ direction. For any face $F$ of $P$, let $B_F = \{v\in \Delta_M \enen (v,h(v))\in F\}$ be the collection of points in $\Delta_M$ which lift to $F$. The \emph{regular subdivision} of $\Delta_M$ induced by $h$ is $\SSS(h) := \{B_F \enen F \text{ face of } R\}$. Notice that we treat subdivisions as a collection of subsets of $\Delta_M$, such that their convex hulls are a subdivision of $\Delta$ in the usual sense.

We call a set $B\in \SSS$ a \emph{facet} of $\SSS$ if $\Conv(B)$ is two dimensional. Given a regular subdivision $\SSS$ consider the cone $\sigma(\SSS) = \{h\in \RR^{\Delta_M} \enen \SSS = \SSS(h)\}$. If $q$ is the restriction of any affine function $\RR^2 \to \RR$ to $\Delta_M$, then $\SSS(h+q) = \SSS(h)$. This implies that this fan has a linearity space of dimension 3. The collection of all cones $\sigma(\SSS)$ is a complete fan in $\RR^{\Delta_M}$ called the \emph{secondary fan} of $\Delta_M$, see \cite[Chapter 7]{gelfand2008discriminants}. The secondary fan happens to be the normal fan of a polytope in $\RR^{\Delta_M}$ called the \emph{secondary polytope} of $\Delta_M$. It is of dimension $\Delta_M -3 = n+g-3$.

\medskip

The ingredients for (real) pathworking are a regular subdivision $\SSS = \SSS(h)$ of a polygon $\Delta$ and a real polynomial $f\in \RR[\Delta_M]$. Let $\Delta_1,\dots,\Delta_s$ be the facts of $\SSS$. Then the polynomial $f|_{\Delta_i}$ defines a real curve $\RR C_i \subseteq X_{\Delta_i}$. Suppose that every curve $\RR C_i$ is smooth and intersects transversally the axes of $X_{\Delta_i}$, that is, $\RR C_i$ intersects each axis in $d$ different points, where $d$ is the integer length of the corresponding edge of $\Delta_i$.
%
Let
\[
f_t(x) := \sum \limits_{v\in \Delta_M} t^{h(v)}a_vx^v
\]
and let $\RR C_t\subseteq X_\Delta$ be the vanishing locus of $f_t$. 
The Patchworking Theorem by Viro \cite{viro2006patchworking} says that there exists $t_0 >0$ small enough such that for every $t\in (0,t_0]$ the topological type of $\RR C_t$ can be computed from the topological type of each $\RR C_i$ by gluing them in a certain way. 
We say $\RR C_t$ is the result of patchworking the curves $\RR C_i$. Given the curves $\RR C_i$, there exist different $f$ such that $f|_{\Delta_i}$ vanishes on $\RR C_i$. 
However, the topological type of the resulting curve $\RR C_t$ only depends on the signs of each $f|_{\Delta_i}$ (real polynomials with the same zero locus differ only by scaling by a constant in $\RR^*$).

We do not show in general how to do this computation, see \cite{viro2006patchworking} for that purpose. 
We do however mention some important facts regarding Harnack curves. First, Mikhalkin showed that Harnack curves can be constructed using patchworking, \cite[Appendix]{mikhalkin2000real}. There it is shown that Harnack curves are \emph{$T$-curves}, that is, curves whose topological type can be obained from patchworking using regular unimodular triangulations as regular subdivision. In that case, the signs of each coefficient of $f$ contain all the relevant information and this is known as combinatorial pathcworking \cite{itenberg1996patchworking}. Consider the sign configuration $\Delta_M \to \{-1,1\}$ given by $v\mapsto (-1)^{v_1v_2}$. No matter the triangulation chosen, the result from patchworking with this sign configuration will always be a Harnack curve \cite[Apendix]{mikhalkin2000real}. Moreover, it is essentially (up to $\ZZ_2^{2}$) the only sign configuration whose patchwork is invariant under the chosen unimodular triangulation. These statements follow directly from the discussion in \cite[Chapter 11 Section 5C]{gelfand2008discriminants}.

Another important fact is that for any regular subdivision $\SSS$, if each curve $\RR C_i$ is a Harnack curve, then there exists a choice of $f_i$ such that the result from patchworking is a Harnack curve, see \Cref{harnack_patchwork}.

\subsection{Cox coordinates}

We now review Cox coordinates for toric surfaces, since it will help us understand the parametrizations of rational Harnack curves. For details see Chapter 5 of \cite{cox2011toric}. They are a generalization of homogeneous coordinates in the projective space $\CP^d = (\CC^{d+1}\setminus\{0\})/\CC^*$. 

Let $\Delta$ be a Newton polygon and recall $u_1,\dots, u_m$ to be the primitive inner normal vectors of $\Delta$. Let $\alpha: (\CC^*)^m \to (\CC^*)^2$ be the group homomorphism given:
\[
(z_1,\dots,z_m) \mapsto (\prod_{i=1}^m z_i^{u_{i1}}, \prod_{i=1}^m z_i^{u_{i2}}).
\]
We have that $\ker(\alpha)$ is a subgroup of $(\CC^*)^m$. Let $Z$ be the subset of $\CC^m$ with at least three coordinates equal to 0 or at least two not-cyclically-consecutive coordinates equal to 0.


%
\begin{proposition}{\cite[Theorem 5.1.11]{cox2011toric}}
Let $\Delta$ be a lattice polygon. All $\ker(\alpha)$-orbits of $\CC^m \setminus Z$ are closed and the quotient $(\CC^m \setminus Z)/\ker(\alpha)$ is isomorphic to $X_\Delta$ as an algebraic variety.
\end{proposition}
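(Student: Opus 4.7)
The plan is to prove both assertions simultaneously by constructing the isomorphism $(\CC^m\setminus Z)/G \cong X_\Delta$ locally over the affine cover of $X_\Delta$ indexed by the vertices of $\Delta$. For each vertex $v$ of $\Delta$ incident to sides $\Gamma_i$ and $\Gamma_{i+1}$, let $\sigma_v := \cone(u_i,u_{i+1})$ be the corresponding maximal cone of the normal fan, so $U_v := \spec\CC[\sigma_v^\vee \cap M]$ is the associated affine toric chart of $X_\Delta$, and $X_\Delta = \bigcup_v U_v$. By the description of $Z$ in the excerpt, the open sets $W_v := \{x \in \CC^m : x_j\neq 0\text{ for all }j\neq i,i+1\}$ cover $\CC^m\setminus Z$, so it suffices to build compatible $G$-invariant isomorphisms $W_v/G \cong U_v$ that glue.

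The first step is to define, for each vertex $v$, a morphism $\pi_v : W_v \to U_v$. Since a point of $U_v$ is a semigroup homomorphism $\sigma_v^\vee\cap M \to \CC$, I would set $\pi_v(x)(w) := \prod_{j=1}^m x_j^{\langle w,u_j\rangle}$ for $w \in \sigma_v^\vee \cap M$. This is well-defined because $\langle w,u_i\rangle, \langle w,u_{i+1}\rangle \geq 0$ (as $w\in\sigma_v^\vee$), while for $j\neq i,i+1$ the coordinate $x_j$ is invertible on $W_v$. Moreover $\pi_v$ is $G$-invariant: dualizing $\alpha$ in (\ref{alphaequation}), the character of $(\CC^*)^m$ obtained by pulling back $\chi_w \in \Hom(M,\CC^*)$ along $\alpha^*$ is precisely $\prod x_j^{\langle w,u_j\rangle}$, which is trivial on $G = \ker(\alpha^*)$.

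The second step is to show that $\pi_v$ induces an isomorphism $W_v/G \xrightarrow{\sim} U_v$ of algebraic varieties. Algebraically, this reduces to the identity $\CC[W_v]^G = \CC[\sigma_v^\vee\cap M]$, obtained by applying $\Hom(\_,\CC^*)$-duality to (\ref{alphaequation}) and checking that the $G$-invariant Laurent monomials in the coordinates of $W_v$ (which are allowed to have arbitrary exponents in $j\neq i,i+1$ but non-negative exponents in $x_i,x_{i+1}$) are exactly those pulled back from $\sigma_v^\vee\cap M$. Surjectivity of $\pi_v$ follows by lifting a point of $U_v$ to the dense torus using surjectivity of $\alpha^*$, then extending to the toric boundary strata of $U_v$ by setting $x_i$ or $x_{i+1}$ to zero as dictated by the corresponding orbit of $U_v$. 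This simultaneously identifies the fibers of $\pi_v$ with the $G$-orbits, so those orbits are closed in $W_v$ as preimages of points under a morphism to an affine scheme.

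Finally, to conclude, the morphisms $\pi_v$ agree on overlaps $W_v \cap W_{v'}$: both restrict on the dense torus $(\CC^*)^m$ to the natural quotient $\alpha^*$, and this determines them uniquely by density and separatedness of $X_\Delta$. Hence they glue to a global $G$-invariant morphism $\pi:\CC^m\setminus Z \to X_\Delta$ whose fibers are precisely the $G$-orbits, giving the desired isomorphism. The main obstacle, and the step demanding most care, is the algebraic identification $\CC[W_v]^G = \CC[\sigma_v^\vee\cap M]$: the inclusion $\supseteq$ is immediate from $G$-invariance of the relevant monomials, but the reverse inclusion requires showing that every $G$-invariant regular function on $W_v$ lies in the semigroup algebra of $\sigma_v^\vee\cap M$, which comes down to a careful analysis of which Laurent monomials descend through $\alpha^*$ to regular functions on $U_v$, relying essentially on the cone $\sigma_v$ being generated by $u_i,u_{i+1}$.
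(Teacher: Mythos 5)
The paper does not prove this proposition --- it is stated as standard background and attributed to \cite{cox2011toric} (the Cox--Little--Schenck book). So there is no ``paper's own proof'' to compare against; what follows is an assessment of your sketch on its own terms.

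Your sketch is the standard Cox-quotient construction from the cited reference: cover $\CC^m\setminus Z$ by the vertex charts $W_v$, identify $\CC[W_v]^G$ with the semigroup algebra $\CC[\sigma_v^\vee\cap M]$ using the exact sequence~(\ref{alphaequation}), and glue the resulting affine quotients over the torus. The identification of invariant monomials with $\alpha(\sigma_v^\vee\cap M)$, and the observation that the $G$-action is diagonal so that $\CC[W_v]^G$ is spanned by its invariant monomials, are both correct and are indeed the technical heart of the affine statement. The gluing step at the end is also fine: both maps agree on the dense torus, and separatedness of $X_\Delta$ forces agreement everywhere.

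The one genuine gap is the sentence asserting that surjectivity of $\pi_v$ ``simultaneously identifies the fibers of $\pi_v$ with the $G$-orbits, so those orbits are closed.'' Establishing $\CC[W_v]^G\cong\CC[\sigma_v^\vee\cap M]$ only gives $U_v$ as the \emph{categorical} (affine GIT) quotient of $W_v$, whose closed points correspond to \emph{closed} $G$-orbits, with non-closed orbits getting collapsed onto the closed orbit in their closure. So the implication runs the other way from how you have written it: one must first show that every $G$-orbit in $W_v$ is already closed (equivalently, that the quotient is geometric), and only then can one conclude that fibers are single orbits. This is precisely where simpliciality of the fan enters --- for a 2D polygon every maximal cone $\sigma_v=\cone(u_i,u_{i+1})$ is automatically simplicial, which forces $G$ to act with finite stabilizers on $W_v$ and prevents orbit degenerations inside a fiber. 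Your closing remark that the argument ``relies essentially on the cone $\sigma_v$ being generated by $u_i,u_{i+1}$'' gestures at this, but as written the closedness claim is derived circularly from the conclusion it is meant to support. To repair it, insert an explicit verification (by a short computation on the strata $x_i=0$, $x_{i+1}=0$, and $x_i=x_{i+1}=0$ of $W_v$) that distinct orbits in $W_v$ have distinct images under $\pi_v$, using that $u_i,u_{i+1}$ span $N_\RR$; closedness of orbits then follows as you stated, and the rest of the argument goes through.
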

We write $[z_1:\cdots: z_m]_\Delta$ to denote the point in $X_\Delta$ corresponding to the orbit of $(z_1,\dots,z_m)\in \CC^m \backslash Z$ under the action of $\ker(\alpha)$. 
We have that
\[
L_i = \left\{[z_1:\cdots: z_m]_\Delta \in X_\Delta \enen z_i = 0\right \}
\]
and
\[
\RR X_\Delta = \left\{[z_1:\cdots: z_m]_\Delta \in X_\Delta \enen z_i\in \RR \enspace \forall i \right \}.
\]

\begin{example}
Let $\Delta$ be any rectangle with edges parallel to the $\RR^2$ axes. The map $\alpha: M \rightarrow \ZZ^4$ is given by the matrix:
\[
\begin{pmatrix}
	1 & 0 & -1 &0\\
	0 &  1 & 0 & -1
\end{pmatrix}
\]
Then $\alpha: (\CC^*)^4 \rightarrow (\CC^*)^2$ is given by $(z_1,z_2,z_3,z_1) \mapsto (z_1z_3^{-1}, z_2z_1^{-1})$. The action of $G$ consists of coordinatewise multiplications by vectors of the form $(\lambda_1,\lambda_2,\lambda_1,\lambda_2)$ where $\lambda_1,\lambda_2 \in \CC^*$. The set $Z$ consists of the points where $z_1=z_3=0$ or $z_2=z_1=0$. So in this case $X_\Delta$ is isomorphic to $\CP^1 \times \CP^1$.
\end{example}

%

\section{Rational Harnack curves}

\subsection{Parametrizations of rational Harnack curves}
We start this section by describing a parametrization of rational Harnack curves which was already used in \cite{kenyon2006planar} for $X_\Delta = \CP^2$ and more generally in \cite{cretois2017vanishing}. We rewrite it using Cox homogenous coordinates. Real rational curves in $X_\Delta$ with Newton polygon $\Delta$
 can be parametrized by $\phi = [p_1:\dots:p_m]_\Delta$, where each $p_i: \CP^1 \rightarrow \CC$ is a homogenuous polynomial of degree $d_i$ with real coefficients for $i\in[m]$ and no two polynomials have a common root.
If the curve is Harnack, $\phi(\RP^1)$ is the 1-dimensional component of $\RR C$. This implies that the roots of $p_i$ are real and are ordered in the cyclic way according to \Cref{def:harnack}. In fact, this condition is sufficient for $\RR C$ to be a Harnack curve. This was shown in \cite[Proposition 4]{kenyon2006planar} for $X_\Delta = \CP^2$ and it was noticed in \cite[Equation (2)]{cretois2017vanishing} that the same arguments work for any projective toric surface. So $\RR \phi(\CP^1)$ is a Harnack curve if and only if, for some chart of $\CP^1$, we have
\begin{equation}
\label{rationalparam}
\phi(t) = \left[b_1\prod\limits_{i=1}^{d_1}(t-a_{1,i}): \enspace \cdots \enspace: b_m\prod\limits_{i=1}^{d_m}(t-a_{m,i})\right]_\Delta
\end{equation}
where all $a_{i,j}$ are real, all $b_i$ are real different from zero and
\begin{equation}
\label{eq:roots}
a_{1,1} \le \dots \le a_{1,d_1} < a_{2,1} \le \dots\le a_{2,d_n} <\dots < a_{m,1} \le \dots \le a_{m,d_m}.
\end{equation}
We call the $a_{1,1}\dots a_{m,d_m}$ the \emph{roots} of $\phi$. 
Composing $\phi$ with $\alpha$ yields a parametrization for $C^\circ \subset (\CC^*)^2$:

\[
\alpha\circ \phi(t) = \left(\prod\limits_{i=1}^m b_i^{u_{i1}}\prod\limits_{j=1}^{d_i}(t-a_{i,j})^{u_{i1}}  ,\prod\limits_{i=1}^m b_i^{u_{i2}}\prod\limits_{j=1}^{d_i}(t-a_{i,j})^{u_{i2}} \right)
\]

Let $\Hdelr$ be the subspace of $\Hdel$ consisting of rational Harnack curves.
The following generalizes \cite[Corollary 5]{kenyon2006planar}:

\begin{proposition}
\label{rationalmoduli}
Let $\Delta$ be a lattice polygon with $m$ sides and $n$ lattice points in its boundary. Then $\Hdelr$ is diffeomorphic to $\RR^{m-3}\times\RR_{\geq0}^{n-m}$. 
\end{proposition}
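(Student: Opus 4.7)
By Proposition~\ref{cyclical}, every rational Harnack curve with Newton polygon $\Delta$ is parametrized as in Equation~(\ref{rationalparam}) by $n+m$ real parameters: the $m$ multiplicative parameters $a_{i,0}$ and the $n=\sum_i d_i$ lexicographically ordered root parameters $a_{i,j}$. Two such parametrizations give the same element of $\Hdelr$ precisely when they differ by an element of $\pgl$ (precomposing the source $\CP^1$) together with a rescaling of each Cox polynomial $p_i$ by a nonzero real. These rescalings act as $(\RR^*)^m$ on the tuple $(p_1,\dots,p_m)$ and encompass both the action of the real form of $G$ (which fixes the Cox class) and of the real torus $(\RR^*)^2$ on $X_\Delta$, consistent with the short exact sequence~(\ref{alphaequation}). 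A dimension count yields $n + m - 3 - m = n - 3 = \dim(\RR^{m-3} \times \RR_{\geq 0}^{n-m})$, so my plan is to produce an explicit diffeomorphism via a gauge fixing.

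First I would use the $(\RR^*)^m$-action to normalize $a_{i,0} = 1$ for every $i$, eliminating all multiplicative parameters. Then, using the 3-transitivity of $\pgl$ on $\RP^1$ (preserving cyclic order), I would fix three of the cluster-starting roots $a_{i,1}$ to prescribed values $v_1 < v_2 < v_3$ in $\RR$, respecting the cyclic order of the clusters. The remaining $m-3$ cluster-starting roots then vary freely in the three arcs of $\RP^1\setminus\{v_1,v_2,v_3\}$ determined by cyclic order, subject only to the strict ordering between distinct clusters. Since strictly ordered tuples in an open interval form a space diffeomorphic to a Euclidean space of the same dimension, this will contribute the $\RR^{m-3}$ factor.

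Within each cluster $i$, the remaining $d_i-1$ roots satisfy $a_{i,1} \leq a_{i,2} \leq \cdots \leq a_{i,d_i} < a_{i+1,1}$, and I would parametrize them by the consecutive spacings $b_{i,j} := a_{i,j+1} - a_{i,j}$, which lie in the half-open simplex $\{b_{i,j} \geq 0 : \sum_{j} b_{i,j} < L_i\}$ with $L_i := a_{i+1,1} - a_{i,1}$. The explicit map $b_{i,j} \mapsto L_i b_{i,j} / (L_i - \sum_{k} b_{i,k})$ is a diffeomorphism from this simplex onto $\RR_{\geq 0}^{d_i-1}$, smoothly depending on $L_i$. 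Aggregating over the $m$ clusters contributes the factor $\RR_{\geq 0}^{n-m}$, since $\sum_i(d_i - 1) = n - m$, and combining with the previous step yields the desired diffeomorphism. The hard part will be verifying that the gauge-fixed slice is genuinely transverse to every $\pgl \times (\RR^*)^m$-orbit and that the fiberwise simplex-to-cone diffeomorphisms glue together smoothly as the cluster gaps $L_i$ vary over the base $\RR^{m-3}$.
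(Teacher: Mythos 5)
Your proposal is correct and follows essentially the same route as the paper's own proof: normalize the multiplicative parameters via the group action, use the $3$-transitivity of $\pgl$ to pin down three cluster-starting roots, read off the remaining $m-3$ cluster-starting roots as the $\RR^{m-3}$ factor and the intra-cluster spacings as the $\RR_{\geq 0}^{n-m}$ factor. You are somewhat more explicit than the paper (which simply lists the constrained parameters and asserts the diffeomorphism) in spelling out the simplex-to-orthant map, and the two concerns you flag at the end are routine: freeness of the combined action after gauge-fixing gives a unique representative per orbit, and your fiberwise map $b_{i,j}\mapsto L_i b_{i,j}/(L_i-\sum_k b_{i,k})$ is visibly smooth jointly in $b$ and $L_i>0$.
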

\begin{proof}
The parametrization above is unique up to the action of the projective special linear group $\pgl$ on the parameter $t$. This induces an action of $\pgl$ on the roots of $\phi$. More explicitely, for $\psi \in \pgl$ the function $\phi \circ \psi^{-1}$ also parametrizes $C$ and has roots $\psi(a_{1,1}),\dots, \psi(a_{m,d_m})$. The roots $\psi(a_{1,1}),\dots, \psi(a_{m,d_m})$ are in the same cyclic order as in \Cref{eq:roots}. 
 
The action of $(\RR^*)^2$ in $\RR X_\Delta$ affects $\phi$ by changing the constants $b_1,\dots,b_n$, but not the roots. The same is true for choosing different representatives of the Cox homogenous coordinates of $X_\Delta$ in \cref{rationalparam}. So every rational Harnack curve is equivalent in $\Hdel$ to a curve with $b_1=\dots=b_d = 1$. Therefore, the elements of $\Hdel$ corresponding to rational curves are uniquely determined by the roots $a_{1,1}\dots a_{m,d_m}$, up to the action of $\pgl$ on them. 

The action of $\pgl$ can be fixed, for example, by considering the unique M\"obius transformation $\psi\in \pgl$ such that $\psi(a_{1,1})= 0$, $\psi(a_{2,1})= 1$ and $\psi(a_{3,1})= 2$. The map
\begin{align*}
\RR C \mapsto & \hspace{5mm} (\psi(a_{4,1}),\dots,\psi(a_{m,1}))\\
&\times(\psi(a_{1,2})-\psi(a_{1,1}),\dots,\psi(a_{m,d_m})-\psi(a_{m,d_m-1}))
\end{align*}
is a diffeomorphism between $ \Hdelr$ and $\RR^{m-3}\times\RR_{\geq0}^{n-m}$ (by identifying copies of $\RR$ with $(\phi(a_{i,d_i}),0)$ for $i\ge 3$). 
\end{proof}

The roots of $\phi$ are associated naturally to the segments of primitive segments of $\partial \Delta$. The proposition above says that we can take as global coordinates of $\Hdelr$ 	the difference between two consecutive roots corresponding to the same edge of $\Delta$ together with the first root of each edge except the first 3 edges. 

\subsection{The positions of the tentacles}

Following \cite{kenyon2006planar}, we now make a useful change of global coordinates in $\Hdelr$. Instead of the roots of $\phi$, we will use the positions of the tentacles of the amoeba which correspond to boundary points $C\setminus C^\circ$. 

Let $J:N\rightarrow M$ be the $2\times 2$ matrix that rotates vectors $\pi/2$ clockwise, i.e. $J=\left(\begin{smallmatrix} 0 &1\\ -1 & 0 \end{smallmatrix}\right)$. Observe that $Ju_i$ is a character, which maps $[x_1,\dots,x_m]_\Delta \mapsto \prod_{k=1}^m x_k^{u_k\wedge u_i}$ where $u_j\wedge u_i \in \RR$ is the determinant of the $2\times2$ matrix whose columns are $u_j$ and $u_i$ in that order. In other words, $u_i\wedge u_j = \langle J u_j, u_i\rangle$. This is not well-defined over all $X_\Delta$. We take $0^0=1$ by convention, so $Ju_i$ is well-defined over $L_i$ except the torus invariant points. In fact, the ring of functions of $L_i$ (without the torus invariant points) consists of Laurent polynomials on $Ju_i$.

\begin{definition}
Let $\RR C$ be a rational Harnack curve parametrized by $\phi$ as in \Cref{rationalparam}. For $1\le i\le m$ and $1\le j\le d_i$, \emph{the position of the $(i,j)$ tentacle of the amoeba} is
\[
\log|\phi(a_{i,j})^{Ju_i}|
\]
\end{definition}
Explicitly,
\[
\log|\phi(a_{i,j})^{Ju_i}| = \sum\limits_{k\ne i}^m \sum\limits_{l=1}^{d_k} u_k\wedge u_i (\log|b_i| + \log|a_{i,j}-a_{k,l}|).
\]
Since the curve does not change when acting the roots by $\pgl$, the positions of the tentacles of the amoeba do not change either. However, the tentacle positions are not invariant under the action of $(\RR^*)^2$ on $\RR X_\Delta$. Concretely, multiplying $\RR C$ by $r \in (\RR^*)^2$ translates the amoeba by $\Log|r|$, thus changing the position of the $(i,j)$ tentacle by $\langle Ju_i, \Log |r| \rangle$. Thus, the $(\RR^*)^2$ action on $\RR X_\Delta$ induces an $\RR^2$ action on the position of the tentacles by translations of the amoeba.  

Consider the maps $\rho_{i,j}: \RR^n \dashrightarrow \RR$ given by
\[
\rho_{i,j} := \sum\limits_{k\ne i}^n \sum\limits_{l=1}^{d_k} u_k\wedge u_i \log|a_{i,j}-a_{k,l}|.
\]

This is almost the position of the $(i,j)$ tentacle except that we drop the $\log|b_i|$ terms. The maps $\rho_{i,j}$ are well-defined on the space of roots satisfying \cref{eq:roots}. Together they form a map $\rho: \RR^n \dashrightarrow \RR^n$ invariant under the $\pgl$ action on the roots. Additionally, consider the action of $\RR^2$ on the target space $\RR^n$ of $\rho$ given by translations of the amoeba, that is:

\begin{equation}
r\cdot \rho_{i,j} := \rho_{i,j}+\langle Ju_i, r \rangle
\label{eq:tentacles_action}
\end{equation}

By construction, $\rho$ descends to a map $\tilde{\rho}$ making the following diagram commutative:
\[ \begin{tikzcd}
\RR^n \arrow[dashrightarrow ]{r}{\rho} \arrow[swap]{d}{} & \RR^n \arrow{d}{} \\%
\RR^n /\pgl  \arrow[dashrightarrow ]{r}{\tilde{\rho}}& \RR^{n-2}
\end{tikzcd}
\]
The left downwards arrow is the quotient of the space of roots by $\pgl$ and the right downward arrow is the quotient of $\RR^n$ by the action of $\RR^2$ described above.
By \Cref{rationalmoduli}, we can identify $\Hdelr$ with the space of roots that satisfy \cref{eq:roots} modulo the $\pgl$ action, so $\tilde{\rho}$ is a well defined map on $\Hdelr$. 
The main objective of this section is to prove the following:
\begin{theorem}
\label{theodiffeo}
The restriction $\tilde{\rho}|_{\Hdelr}$ is a smooth embedding.
\end{theorem}
This is a generalization of Theorem 4 in \cite{kenyon2006planar}, where the same statement is proven for the case where $\Delta$ is a dilated unit triangle. To prove that $\tilde{\rho}|_{\Hdelr}$ is a diffeomorphism we show that it is proper (\Cref{subsec:proper}) and that the differential is injective (\Cref{subsec:jacobian}). 

Before going to the proof, let us show a concrete diffeomorphism between the positions of the tentacles and $\RR^{m-3}\times\RR_{\ge0}^{n-m}$. For the semi-bounded components, consider the distance between two parallel tentacles
\[
\rho_{i,j+1}-\rho_{i,j} \enspace \text{ for } 1\le i \le m, \enspace 1\le j \le d_i-1.
\]
 Notice that $\rho_{i,j+1}-\rho_{i,j}$ is invariant under the $\RR^2$-action of translating the amoeba. For the unbounded components take as coordinates
\[
\tilde{\rho}_{i,1} \enspace  \text{ for } 4 \le i \le m,
\]
where $\tilde{\rho}_{i,1}$ is the position of the $(i,1)$-tentacle after translating the amoeba, so that the position of the $(1,1)$ and $(2,1)$ tentacles are both 0. It is straightforward to see that the image satisfies
\begin{equation}
\sum_{i=1}^m \sum_{j=1}^{d_i} \rho_{i,j} = 0 \quad \text{ and }\quad \rho_{i,j+1}-\rho_{i,j}\ge 0 \quad \text{ for all $i,j$}.
\label{eq:codomain}
\end{equation}
In particular, $\tilde{\rho}_{3,1}$ is determined by the rest of the coordinates. 
The space described by the equation and the inequalities above is simply connected. This is important since we will use the following global diffeomorphism theorem, which was known by Hadamard. A proof of it can be found in \cite{Gordon}.
\begin{proposition}
\label{prop_hadamard}
A local diffeomorphism between two manifolds which is proper and such that the image is simply connected is a diffeomorphism.
\end{proposition}

 
\subsection{Properness}
\label{subsec:proper}
To prove that $\tilde{\rho}|_{\Hdelr}$ is proper, we make use of the following lemma:
\begin{lemma}
\label{lemma:proper}
Let $x_1,\dots x_n,y_1,\dots, y_n \in \RR$ such that $x_1\le \dots \le x_n$, $\sum\limits_{i=1}^n y_i = 0$ and there exists $j$ such that $y_i <0$ for $i<j$ and $y_i \ge 0$ for $i> j$. Then 
\[
\sum_{i=1}^n x_iy_i \ge 0.
\]
\end{lemma}
\begin{proof}
We do induction on $n$. For $n=1$ we have $y_1= 0$ so the above sum is $0$. Let $n>1$ and suppose the inequality above holds for less than $n$ terms. Subtract from the left-hand-side $y_n(x_n-x_{n-1})$, which is a non-negative number. The result is
\[
\sum_{i=1}^{n-2} x_iy_i +x_{n-1}(y_{n-1}+y_n)
\] 
which is non-negative by applying the induction hypothesis to $x_1, \dots, \allowbreak x_{n-1}, y_1, \dots, y_{n-2}, y_{n-1}+y_n$. 
\end{proof}

\begin{proposition}
The map $\tilde{\rho}|_{\Hdelr}$ is proper when restricting the co-domain to the quotient of the space described by the equation and inequalities in (\ref{eq:codomain}).  
\label{properpropo}
\end{proposition}
\begin{proof}
We want to show that the preimage of a compact set is a bounded set in $\Hdelr \cong \RR^{m-3}\times\RR_{\ge0}^{n-m}$. First we bound the parameters corresponding to the $\RR_{\ge 0}^{n-m}$ part of $\Hdelr$. These correspond to the difference between roots along the same edge of $\Delta$, which are trivially bounded from below by 0. Without loss of generality consider the roots $a_{1,j-1}$ and $a_{1,j}$ for $1<j\le d_1$. We fix the $\pgl$ action on the space of roots by setting $a_{1,j-1}= -1$, $a_{2,1}= 0$ and $a_{m,d_m} = 1$. To show that $a_{1,j}-a_{1,j-1}$ attains its upper bound, we show that $a_{1,j}$ can not be arbitrarily close to $a_{2,1}= 0$. By assumption, the difference between the position of the tentacles
\begin{equation}
\rho_{1,j}-\rho_{1,j-1} = \sum\limits_{i=2}^n \sum\limits_{k=1}^{d_i} u_i\wedge u_1 \left(\log|a_{i,k}-a_{1,j}|-\log|a_{i,k}+1|\right)
\label{eq:tentacles}
\end{equation}
is bounded. We have that $a_{1,j} \in [-1,0)$ and $a_{i,k}\in [0,1]$ if $i\ge 2$. The function $\log|x-a_{1,j}|-\log|x+1|$ is increasing in $[0,1]$. On the other hand, 
\[
\sum\limits_{i=2}^n \sum\limits_{k=1}^{d_i} u_i\wedge u_1 = \left(\sum\limits_{i=1}^n d_iu_i\right)\wedge u_1 = 0
\]
by \Cref{eq:balancing}. Notice that for $i\in\{2,\dots,m\}$, $u_i\wedge u_1$ is positive for the first numbers and then negative for the rest (with a 0 in between if and only if $\Delta$ has another edge parallel to $\Gamma_1$). 

So, applying \Cref{lemma:proper} to the sequences $\log|a_{i,j}-a_{1,j}|-\log|a_{i,j}+1|$ and $u_i\wedge u_1$ (repeated $d_i$ times) we get that $\rho_{1,j}-\rho_{1,j-1}$ is non-negative. More importantly, since $u_2\wedge u_1 <0<u_n \wedge u_1$, we can subtract 
\begin{align*}
&\left(\log|a_{m,d_m}-a_{1,j}|-\log|a_{m,d_m}+1|\right)-\left(\log|a_{2,1}-a_{1,j}|-\log|a_{2,1}+1|\right)\\
= &\log|1-a_{1,j}|-\log|2|-\log|a_{1,j}|\\
\ge &-\log|2|-\log|a_{1,j}|
\end{align*}
from the right-hand-side in \Cref{eq:tentacles} and again by \Cref{lemma:proper} the result is still non-negative. This implies $\rho_{1,j}-\rho_{1,j-1}\ge -\log|2|-\log|a_{1,j}|$, which is arbitrarily large if $a_{1,j}$ is arbitrarily close to $0$. Since we assumed $\rho_{1,j}-\rho_{1,j-1}$ is bounded, $a_{1,j}$ is not arbitrarily close to $0$. 

Now we turn our attention to the $\RR^{m-3}$ component, assuming $m>3$. 
We choose a different representative of the $\pgl$-orbit on the roots by setting $a_{1,1}= 0$, $a_{3,1} = 1$ and $a_{m,d_m}= 2$. We will show that $a_{2,1}$ can not be arbitrarily close to $a_{1,1}= 0$. That implies that the paramater $a_{2,1}$ has a minimum in the preimage under $\tilde{\rho}$ of any compact set, and it attains it by continuity. Analogously, all other bounds regarding the $\RR^{m-3}$ component are achieved and we conclude the preimage is compact. 

To fix the $\RR^2$ action on the positions of the tentacles, we assume that the position of the $(1,1)$ tentacle and the $(m,d_m)$ tentacle are both 0. This is translating the amoeba by the vector
\[
w =  \dfrac{-\rho_{m,d_m}}{u_1\wedge u_m}u_1+\dfrac{-\rho_{1,1}}{u_m\wedge u_1}u_m.
\]
We show that if the position of the second tentacle after this translation,
\[
\hat{\rho}_{2,1} = \rho_{2,1} -\rho_{m,d_m}\dfrac{u_1\wedge u_2}{u_1\wedge u_m}-\rho_{1,1}\dfrac{u_m\wedge u_2}{u_m\wedge u_1},
\]
is bounded from below then $a_{2,1}$ is not arbitrarily close to $a_{1,1}= 0$. 

By \Cref{lemma:proper}, $\rho_{m,d_m}$ is non-positive, since $\log|x-2|$ is a decreasing function in $[0,2)$ and $u_m\wedge u_i$ is negative for the first values of $i$ and positive for the later. Since $u_1\wedge u_2<0< u_1\wedge u_m$, we have that $-\rho_n\dfrac{u_1\wedge u_2}{u_1\wedge u_m}$ is non-positive.

In both $\rho_{2,1}$ and $\rho_{1,1}$ there is a $\log|a_i|$ term which is arbitrarily large in absolute value if $a_{2,1}$ is close to $0$. As $a_{i,j}>1$ if $i\ge 3$, the only terms in $\rho_{1,1}$ and $\rho_{2,1}$ which can be arbitrarily large in absolute value are those corresponding to $a_{2,j}$ and $a_{1,j}$, respectively. 

In other words, the part that could grow arbitrarily large in absolute value in $\rho_{1,1}$ is
\[
\sum\limits_{j=1}^{d_2} u_2\wedge u_1 \log|a_{2,j}|
\]
and in $\rho_{2,1}$ it is 
\[
\sum\limits_{j=1}^{d_1} u_1\wedge u_2 \log|a_{2,1}-a_{1,j}|.
\]
Notice that 
\begin{equation}
|\log|a_{2,1}-a_{1,j}||  \ge |\log |a_{2,1}|| \ge |\log|a_{2,j}|.
\label{eq:logsproper}
\end{equation}
Let $c_1$ be the real number such that
\[
\sum\limits_{j=1}^{d_1} u_1\wedge u_2 \log|a_{2,1}-a_{1,j}| = c_1u_1\wedge u_2 \log|a_{2,1}|.
\]
By \Cref{eq:logsproper} we have that $c_1 \ge d_1$. Similarly, if $c_2$ is such that 
\[
\sum\limits_{j=1}^{d_2} u_2\wedge u_1 \log|a_{2,j}| = c_2u_2\wedge u_1 \log|a_{2,1}|
\]
then by \Cref{eq:logsproper} $c_2 \le d_2$. So the part of $\hat{\rho}_{2,1}$ which grows in absolute value is
\begin{align*}
&c_1u_1\wedge u_2 \log|a_{2,1}|-c_2u_2\wedge u_1 \log|a_{2,1}|\dfrac{u_m\wedge u_2}{u_m\wedge u_1}\\
&= \log|a_{2,1}|\dfrac{u_1\wedge u_2}{u_m\wedge u_1}\left(c_1u_m\wedge u_1 + c_2u_m\wedge u_2 \right)\\
&= \log|a_{2,1}|\dfrac{u_1\wedge u_2 \cdot u_m\wedge (c_1u_1+c_2u_2)}{u_m\wedge u_1} 
\end{align*}

Notice that $-d_1u_1-d_2u_2$ is the inner normal vector of the third side of the triangle formed by $\Gamma_1$ and $\Gamma_2$, so $u_m \in \cone(-d_1u_1-d_2u_2,u_1)$ since $m>3$. 
Thus, $u_m \wedge (d_1u_1+d_2u_2) > 0$. 
Because $c_1\ge d_1$ and $c_2 \le d_2$, we have that $c_1u_1+c_2u_2\in \cone(u_1,d_1u_1+d_2u_2)$. As $u_m \wedge u_1$ is also positive, we have that $u_m \wedge (c_1u_1+c_2u_2)> 0$. We conclude that
\[
\dfrac{u_1\wedge u_2 \cdot u_m \wedge (c_1u_1+c_2u_2)}{u_m\wedge u_1} > 0,
\]
which implies that $\hat{\rho}_{2,1}$ is negative and arbitrarily large in absolute value if $a_{2,1}$ is arbitrarily close to 0.
\end{proof}


\subsection{The Jacobian of $\rho$}
\label{subsec:jacobian}
In this subsection we proof that $\tilde{\rho}|_{\Hdelr}$ is a local diffeomorphism.

From now on, we sightly change the notation we have used so far to simplify the exposition and the computations. 
Instead of labelling the roots by pairs $(i,j)$ with $i\in [m]$ and $j\in [d_i]$, we relabel them as $a_1,\dots,a_n$ in the global cyclic order. Similarly, we relabel the $u_i$'s to agree with the labelling of the roots; that is, we have vectors $u_1,\dots,u_n$ where $u_i$ is the primitive inner normal vector of the edge of $\Delta$ that corresponds to the axis in which $\phi(a_i)$ vanishes. 
With this notation, we have that 
\[
\alpha(\phi)(t) = \prod_{i=1}^n (t-a_i)^{u_i}
\]
and that the $a_i$ and the $u_i$ correspond to a parametrization of a Harnack curve if
\begin{enumerate}
	\item $\sum\limits_{i=1}^n u_i = 0$.
	\item $a_1\le\dots \le a_n$. 
	\item The $u_1,\dots,u_n$ are ordered anticlockwise.
\end{enumerate}
We write $a= (a_1,\dots,a_n)$ and $U = (u_1,\dots,u_n)$ for short and we say $a$ and $U$ are \emph{cyclically ordered} if they satisfy the above conditions.
\medskip

Now we consider the Jacobian matrix $D$ of $\rho$ at a given point $a$. We have that 
\[
D_{i,j} =
\begin{cases*}
      \dfrac{u_i\wedge u_j}{a_i-a_j} & if $a_i \neq a_j$ \\
      0        & if $a_i=a_j$ but $i\neq j$\\
			-\sum\limits_{k\neq i}D_{i,k} & if $i= j$
\end{cases*} 
\]
In general, $D$ is a matrix that depends on $a$ and $U$, so we denote it as $D(a,U) = D(a_1,\dots,a_n;u_1,\dots,u_n)$. 

\begin{proposition}
\label{prop_local}
The map $\tilde\rho|_{\Hdelr}$ is a local diffeomorphism.
\end{proposition}
\begin{proof}
Let $a$ and $U$ be cyclically ordered. Let $T_a\pgl a$ be the tangent space of the orbit of $a$ under the $\pgl$ action at $a$ and similarly let $K$ be the kernel of the quotient $\RR^n\to \RR^n/\RR^2$ by the $\RR^2$ action defined in \Cref{eq:tentacles_action}. In other words,
\[
K =\{(r\wedge u_1,\dots,r\wedge u_n) \enspace \mid \enspace r\in \RR^2\}
\]
which is a linear space. Let us look at the relation of these spaces with $D$.

To compute the tangent space $T_a\pgl a$, recall that M\"obius transformations are of the form
\[
t \mapsto \dfrac{at+b}{ct+d}.
\]
We see that
\[
\left.\dfrac{\partial}{\partial \epsilon} t+\epsilon  \right|_{\epsilon = 0} = 1, \quad \left.\dfrac{\partial}{\partial \epsilon} (1+\epsilon)t \right|_{\epsilon = 0} = t,
\quad \left.\dfrac{\partial}{\partial \epsilon} \dfrac{t}{\epsilon t +1} \right|_{\epsilon = 0} = -t^2,
\]
so $T_a\pgl a$ is spanned by the vectors $(1,\dots,1)$, $(a_1,\dots,a_n)$ and $(a_1^2,\dots,a_n^2)$. Since
\[
\sum_{j=1}^n D_{i,j}= D_{i,i}-D_{i,i} = 0
\]
and
\[
\sum_{j=1}^n a_jD_{i,j} = \sum_{j=1}^n\frac{a_j \cdot u_i\wedge u_j}{a_i-a_j} - \sum_{j=1}^n\frac{a_i \cdot u_i\wedge u_j}{a_i-a_j} = -\sum_{j=1}^n u_i\wedge u_j = 0,
\]
we have that both $(1,\dots,1)$ and $(a_1,\dots,a_n)$ are in the kernel of $D$. In \Cref{rank_prop} we will proof that the kernel of $D$ is 2 dimensional, so the vector $(a_1^2,\dots,a_n^2)$ is not in the kernel. However, since $\rho$ descends to the map $\tilde\rho$, we have that $D \cdot (a_1^2,\dots,a_n^2)^\top \in K$. 

Since $D$ is symmetric, its image is the orthogonal complement of its kernel. So the image of $D$ is orthogonal to $(1,\dots,1)$ and $(a_1,\dots,a_n)$. 
On the other hand, $K$ is always orthogonal to $(1,\dots,1)$. By a similar argument as in the proof of \Cref{properpropo}, we have that $\sum\limits_{i=1}^n a_iu_i\ne 0$, so $(a_1,\dots,a_n)$ is never orthogonal to $K$. Thus, the intersection of the image of $D$ with $K$ is exactly one dimensional so it is spanned by $(a_1^2,\dots,a_n^2)$. 
This implies that the Jacobian of $\tilde\rho$ is injective, since no vector outside $T_a\pgl a$ vanishes under the composition of $D$ and the quotient $\RR^n \to \RR^n/\RR^2$.
\end{proof}

Now the only thing left to prove is the following:

\begin{proposition}
\label{rank_prop}
If $a$ and $U$ are cyclically ordered then the rank of $D(a,U)$ is $n-2$.
\end{proposition}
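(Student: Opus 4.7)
The kernel of $D$ already contains the two vectors $(1,\dots,1)$ and $(a_1,\dots,a_n)$, so it suffices to prove $\dim\ker D\le 2$. My plan is to encode the condition $v\in\ker D$ as the vanishing of a single scalar rational function obtained as a wedge product, and then use standard facts about rational functions together with the cyclic-ordering hypothesis.

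Introduce the two $N_\RR$-valued rational functions
\[
F(t)=\sum_{j=1}^n \frac{v_j u_j}{t-a_j}, \qquad H(t)=\sum_{j=1}^n \frac{u_j}{t-a_j},
\]
and set $\Phi(t):=F(t)\wedge H(t)$. A direct Laurent expansion at $t=a_k$ shows that the double-pole term of $\Phi$ cancels (because $u_k\wedge u_k=0$) while the simple-pole residue works out to be exactly $-(Dv)_k$. Using $\sum u_j=0$ one gets $H(t)=O(1/t^2)$ and $F(t)=O(1/t)$ at infinity, so $\Phi(t)=O(1/t^3)$ there. Hence $v\in\ker D$ if and only if $\Phi$ has no finite poles and vanishes at infinity, i.e.\ $\Phi\equiv 0$.

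Since $H\not\equiv 0$, the identity $\Phi\equiv 0$ forces $F=\lambda H$ for some scalar rational function $\lambda$, and matching residues at each $a_k$ gives $\lambda(a_k)=v_k$. The next step is to show that $\lambda$ is a polynomial of degree at most $1$: a pole of $\lambda$ at any $c$ would make $\lambda H$ have pole order $\ge 2$ at $c$ (taking $c=a_k$ since otherwise $F$ would acquire a pole outside the $a_j$'s), contradicting the fact that $F$ has only simple poles; and the leading asymptotics $H\sim(\sum_j a_ju_j)/t^2$ together with $F=O(1/t)$ give $\deg\lambda\le 1$, provided that $\sum_j a_j u_j\ne 0$. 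Given this, $v_k=\alpha a_k+\beta$ and we are done.

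The critical remaining point, and the main obstacle, is establishing $\sum_j a_j u_j\ne 0$ from the cyclic ordering. A summation-by-parts with $U_j:=u_1+\cdots+u_j$ gives
\[
\sum_{j=1}^n a_j u_j = -\sum_{j=1}^{n-1}(a_{j+1}-a_j)\,U_j,
\]
a non-negative (and not identically zero) combination of $U_1,\dots,U_{n-1}$. By Lemma \ref{balancinglemma} and cyclic ordering, the points $U_0=0,U_1,\dots,U_{n-1},U_n=0$ trace out a convex lattice polygon $P$ in $N_\RR$; because $\Delta$ has $m\ge 3$ sides, the first and last side-normals are distinct and non-parallel, so the origin is a genuine vertex of $P$. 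A linear functional supporting $P$ at $0$ is strictly positive on all other $U_j$ (they lie in $P\setminus\{0\}$ and are non-zero since $P$ is convex with a unique visit to the origin), so no nontrivial non-negative combination can vanish. A secondary technical wrinkle is the degenerate case in which several consecutive $a_j$'s coincide; cyclic ordering forces the corresponding $u_j$'s to coincide as well, and a direct inspection of the affected rows of $D$ shows $(Dv)_i=0$ already forces $v$ to be constant on each such block, reducing the problem to the same residue argument for the smaller cyclically ordered system of distinct values $(b_s,m_s u^{(s)})$.
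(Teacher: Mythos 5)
Your approach is genuinely different from the paper's, and the core idea is elegant: encoding $v\in\ker D$ as the vanishing of the scalar rational function $\Phi=F\wedge H$ and extracting $v$ from residues of $\lambda=F/H$. The paper instead proves that $D$ is a positive sum of the rank-one positive semidefinite $3\times 3$ matrices $T(a_i,a_j,a_k)$ (``$T$-decomposability,'' Proposition~\ref{decomp_prop}), obtained by running the geometric cut-and-dilate process of Section~3 backwards; the common kernel of the summands is then exactly $\Span\{(1,\dots,1),(a_1,\dots,a_n)\}$. Your residue computation is correct (the residue of $\Phi$ at $a_k$ is indeed $-(Dv)_k$, the double pole cancels because $u_k\wedge u_k=0$, and $\Phi=O(t^{-3})$), and the summation-by-parts identity together with the supporting-functional argument at the vertex $U_0=0$ of the convex polygon traced by the partial sums $U_j$ is a clean way to get $\sum_j a_ju_j\neq 0$ (this uses that $\Gamma_1$ and $\Gamma_m$ are adjacent sides of $\Delta$, so $u_1\neq\pm u_n$, and that the polygon is simple so $U_j\neq 0$ for $1\le j\le n-1$).

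However, there is a genuine gap in the step ``$\lambda$ is a polynomial.'' You rule out a pole of $\lambda$ at $c\notin\{a_k\}$ by saying $F=\lambda H$ would acquire a pole there; this fails precisely when $H$ has a zero at $c$ of order at least the pole order of $\lambda$. Writing $H=P/Q$ with $Q=\prod_j(t-a_j)$, the issue is whether $\gcd(P_1,P_2)=1$, i.e.\ whether the (vector-valued) function $H$ is nowhere zero on $\CC\setminus\{a_j\}$. This is equivalent to the logarithmic Gauss map $t\mapsto[P_1(t):P_2(t)]$ having the full degree $n-2$, which for Harnack curves follows from the totally-real property of the Gauss map established by Mikhalkin and Passare--Risler, but it is a nontrivial input that your argument neither states nor proves. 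Without it, $\lambda=h/g$ may have genuine poles (at the common zeros of $P_1,P_2$), and then $\lambda(a_k)=v_k$ no longer forces $v_k$ to be affine in $a_k$. One reason the paper's $T$-decomposition route is attractive is precisely that it sidesteps this analytic fact about the Gauss map, replacing it with an elementary matrix decomposition that follows the combinatorics of cutting vertices. Finally, your treatment of the degenerate case (consecutive equal $a_j$) also relies on $D_{j,j}\neq 0$, which you would need to justify independently, whereas in the paper it is a free byproduct of positive semidefiniteness.
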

\begin{proof}
In \cite[Theorem 4]{kenyon2006planar} the authors prove this for the case where $\Delta$ is the dilation of a the unit triangle. They do it by showing that $D$ is a sum of $3\times 3$-block semidefinite positive matrices of rank $1$, each corresponding to the Jacobian of the unimodular triangle case. We here generalize this for any polygon $\Delta$. Let $e_1$, $e_2$, $e_3$ be the primitive normal vector of the standard unimodular triangle in clockwise order and let $T(a_i,a_j,a_k) = D(a_i,a_j,a_k,e_1,e_2,e_3)$ (see Equation (4.8) in \cite{kenyon2006planar}). 
We have that $T(a_i,a_j,a_k)$ is a rank 1 matrix with kernel generated by $(1,1,1)$ and $(a_i,a_j,a_k)$. We obtain thenon-zero eigenvalue by computing the image under $D$ of the cross-product of the two vectors in the kernel, $(1,1,1)\times (a_i,a_j,a_k)$. We get that the eigenvalue is
\[
\dfrac{(a_i-a_j)^2 +(a_j-a_k)^2+(a_k-a_i)^2}{(a_i-a_j)(a_j-a_k)(a_k-a_i)},
\]
which is always positive when $a_i<a_j<a_k$. 

Let $T_{i,j,k}(a_i,a_j,a_k)$ be the $n\times n$ matrix that restricts to $T(a_i,a_j,a_k)$ in the $3\times 3$ submatrix with indices $\{i,j,k\}$ and that is zero elsewhere.
We will show that if $a$ and $U$ are cyclically ordered, then $D(a,U)$ is a positive sum of matrices of the form $T_{i,j,k}(a_i,a_j,a_k)$. In other words we want to show that:
\[
D(a,U) \in \cone\left(\{ T_{i,j,k}(a_i,a_j,a_k) \enspace  \mid \enspace 1\le i<j<k \le n\}\right).
\]

To do so, we write each $u_i$ in the unique way $u_i= x_ie_1+y_ie_2+z_ie_3$ where $x_i,y_i,z_i\ge0$ and at most two are positive. With this notation we have that
\[
u_i\wedge u_j = x_iy_j+y_iz_j+z_ix_j -x_iz_j-y_ix_j-z_iy_j
\]
and that 
\[
\sum_{i=1}^n x_i =\sum_{i=1}^n y_i = \sum_{i=1}^n z_i.
\]
We call $c$ the constant above. For $i<j<k$ let
\[q_{i,j,k} = \det
\begin{pmatrix}
x_i & y_i &z_i\\
x_j & y_j & z_j\\
x_k & y_k & z_k
\end{pmatrix}.
\]

For all $i<j<k$ we have that $q_{i,j,k}\ge 0$. To see that, notice that the vectors $(x_i,y_i,z_i)$ are ordered cyclically along 
\[\RR^2\times\{0\} \cup \RR\times\{0\}\times\RR \cup \{0\}\times\RR^2,
\]
since they project to $U$ under 
$\left(\begin{smallmatrix}
1 & 0 & -1\\
0 & 1 & -1
  \end{smallmatrix}\right)$. Therefore, by the right-hand-rule, the determinant of that matrix is non-negative. Now we claim that
\[
\sum_{i<j<k}q_{i,j,k}T_{i,j,k}(a_i,a_j,a_k) = cD(a,U).
\]
To verify that claim, look at the coefficient of $\frac{1}{a_i-a_j}$ in the $(i,j)$-entry of the left hand side. We have that the coefficient is equal to
\begin{align*}
&x_iy_j\left(\sum_{k=1}^n z_k\right)+y_iz_j\left(\sum_{k=1}^n x_k\right)+z_ix_j\left(\sum_{k=1}^n y_k\right)\\
- &x_iz_j\left(\sum_{k=1}^n y_k\right)-y_ix_j\left(\sum_{k=1}^n z_k\right)-z_iy_j\left(\sum_{k=1}^n x_k\right)
&= c\cdot u_i\wedge u_j
\end{align*}
Therefore, $D(a,U)$ is the sum of positive semidefinite matrices, so its kernel is the intersection of the kernels of all of the summands. This implies that the kernel is the span of $(1,\dots,1)$ and $(a_1,\dots,a_n)$.
\end{proof}


%

\begin{proof}[Proof of Theorem \ref{theodiffeo}]
By \Cref{properpropo,prop_local}, $\tilde{\rho}$ is a proper local diffeomorphism whenever $a$ and $U$ are cyclically ordered. By \Cref{prop_hadamard}, $\tilde{\rho}|_{\Hdelr}$ is a diffeomorphism onto the space defined by \Cref{eq:codomain} modulo $\RR^2$.
\end{proof}

\section{From $\Hdelr$ to $\Hdel$}
The reason for the change of coordinates by $\tilde{\rho}$ is that fixing the position of the tentacles is fixing $C\setminus C^\circ$, which implies fixing $f|_{\partial \Delta}$ up to scaling by a constant. Polynomials using the remaining monomials $\inti(\Delta)\cap M$ were shown to be in correspondence with holomorphic differentials, in \cite[Section 2.2.4]{kenyon2006planar} for $\CP^2$ and in \cite[Lemma 4.3]{cretois2017vanishing} for smooth toric projective surfaces. The following lemma generalizes this to $X_\Delta$ for any $\Delta$. 
\begin{lemma}
\label{lemma_holomorphic}
Let $\Delta$ be any lattice polygon and let $C\subseteq X_\Delta$ be the vanishing set of a polynomial $f$ with Newton polygon $\Delta$ such that $\RR C$ is a Harnack curve. Then the space of holomorphic differentials on $C$ is isomorphic to the space of polynomials with coefficients in $\inti(\Delta)\cap M$, via the map
\begin{equation}
h(z_1,z_2) \mapsto \dfrac{h(z_1,z_2) dz_2}{\partial_{z_1} f(z_1,z_2)z_1z_2} 
\label{eq:differential}
\end{equation}
\end{lemma}
\begin{proof}
The map is injective and both spaces have dimension $g$, so it remains to prove that the differentials from \Cref{eq:differential} are holomorphic.
If $\Delta$ has a vertex at the origin with incident edges given by the coordinate axes, then the differentials from \Cref{eq:differential} are holomorphic over $\CC^2\cap C$ (see \cite[Lemma 4.3]{cretois2017vanishing}). 

Given any lattice-preserving affine map $A: M \otimes \RR \to M \otimes \RR$, that sending a polygon $\Delta'$ to $\Delta$, there is a dual map $A^\vee X_\Delta \to X_{\Delta'}$. In \cite{cretois2017vanishing} it is shown that the pullback of $A^\vee$ sends differentials of the form of \Cref{eq:differential} for $\Delta'$ to differentials of that form for $\Delta$. For each vertex $v$ of $\Delta$, consider the lattice preserving affine map that sends the positive orthant to the cone spanned by $\Delta$ from $v$. Then the differentials from \Cref{eq:differential} are holomorphic in the intersection of $C$ with the affine chart corresponding to $v$. Doing that for every vertex we get that they are holomorphic in all of $C$.   

If $\Delta$ is not a smooth polygon, then such a lattice-preserving map does not exist. However, given any vertex $v$ of $\Delta$, there is a lattice-preserving map that sends the cone spanned by $(1,0)$ and $(p,q)$ to the cone spanned by $\Delta$ from $v$, for some suitable $p,q \in \NN$. Let $\Delta'$ be the preimage of $\Delta$ under such map. By the same arguments as in \cite[Lemma 4.3]{cretois2017vanishing}, the differentials of the form of \Cref{eq:differential} for $\Delta'$ are holomorphic in $(\CC\times \CC^*)\cap C$. This implies that the pullback is holomorphic in $((\CC^*)^2\cup L) \cap C$, where $L$ is the axis that corresponds to the edge contained in the image under $A$ of the coordinate axis $\{x_1 = 0\}$. This can be done for every edge of $\Delta$. Since $C$ does not contain the intersection any two axes, it follows that the differentials in \Cref{eq:differential} are holomorphic over all $C$. 
\end{proof}

\begin{proposition}
\label{boundarypropo} The areas of the holes of the amoeba are global coordinates for the moduli space of Harnack curves with fixed Newton polygon $\Delta$ and fixed boundary points. Moreover, the moduli space of Harnack curves with fixed boundary is diffeomorphic to $\RR_{\geq0}^g$.
\end{proposition}
\begin{proof}
Recall the diagram from \Cref{eq:fixed_boundary} in the introduction. The first map sends a Harnack curve with fixed boundary (that is, we fix $f|_{\partial \Delta}$) to the bounded Ronkin intercepts. By \Cref{lemma_holomorphic}, the differential of that map is the period matrix of $C$ (see \cite[Proposition 6]{kenyon2006planar} and \cite[Theorem 3]{cretois2017vanishing}). The second map, from the bounded intercepts to the areas of the holes in the amoeba, is also a local diffeomorphism because its differential is diagonally dominant (see \cite[Proposition 10]{kenyon2006planar}). The areas of the holes of the amoeba are non-negative and the composition of the two maps is proper over $\RR_{\ge0}^g$ (see \cite[Theorem 6]{kenyon2006planar}). All of these facts are proven in \cite{kenyon2006planar} and none of the arguments used there require that $X_\Delta = \CP^2$. Again, by \Cref{prop_hadamard} the composition of the maps is a diffeomorphism onto $\RR_{\geq0}^g$.
\end{proof}

Notice that the positions of the tentacles are also well-defined numbers for non-rational Harnack curves: they are simply the evaluation of $J u_i$ on the points $C\cap L_i$. So by \Cref{theodiffeo} and \Cref{rationalmoduli,boundarypropo} we have that the positions of the tentacles of the amoeba modulo translation together with the areas of the holes of the amoeba are global coordinates for $\Hdel$. Hence, we have proved that.

\begin{theorem}
\label{modulitheorem}
Let $\Delta$ be a lattice polygon with $m$ sides, $g$ interior lattice points and $n$ boundary lattice points. Then $\Hdel$ is diffeomorphic to $\RR^{m-3}\times\RR_{\geq0}^{n+g-m}$.
\end{theorem}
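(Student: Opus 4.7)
The plan is to exhibit $\Hdel$ as a trivial fiber bundle over $\Hdelr$ with fiber $\RR_{\geq 0}^g$, where the base contributes the tentacle positions of the amoeba and the fiber contributes the areas of the compact ovals. The three ingredients needed are already assembled in the paper: Proposition \ref{rationalmoduli} identifies $\Hdelr \cong \RR^{m-3}\times\RR_{\geq 0}^{n-m}$; Theorem \ref{theodiffeo} shows that the tentacle-position map $\tilde{\rho}$ is a diffeomorphism from $\Hdelr$ onto its image $B := \tilde{\rho}(\Hdelr) \subseteq \RR^{n-2}$; and Proposition \ref{boundarypropo} says the subspace of $\Hdel$ with fixed boundary points is diffeomorphic to $\RR_{\geq 0}^g$ via the areas of the compact ovals.

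The key map to set up is
\[
\Psi\colon \Hdel \longrightarrow \Hdelr \times \RR_{\geq 0}^g, \qquad C \longmapsto (C_0,(A_1,\dots,A_g)),
\]
where $(A_1,\dots,A_g)$ are the areas of the $g$ compact ovals of the complement of the amoeba of $C$, and $C_0$ is the unique rational Harnack curve whose tentacle positions (modulo translations) coincide with those of $C$. The existence and uniqueness of $C_0$ is exactly the content of Theorem \ref{theodiffeo}: the tentacle position of $C$ determines a point in $B$, which via $(\tilde{\rho}|_{\Hdelr})^{-1}$ determines a unique rational Harnack curve. Thus rational Harnack curves play the role of a canonical smooth section of the projection $\pi\colon \Hdel \to B$ sending $C$ to its tentacle positions.

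To show $\Psi$ is a diffeomorphism I would verify both directions: injectivity and smoothness of $\Psi$ follow since the tentacle positions vary smoothly on $\Hdel$ (so $C \mapsto C_0$ is smooth by Theorem \ref{theodiffeo}) and the oval areas are smooth coordinates on each fiber by Proposition \ref{boundarypropo}; surjectivity and smoothness of $\Psi^{-1}$ come from noting that given $(C_0,(A_1,\dots,A_g))$, the curve $C$ is recovered as the unique Harnack curve with Newton polygon $\Delta$, with the boundary points of $C_0$, and with prescribed oval areas, again by Proposition \ref{boundarypropo}. Combining $\Hdelr \cong \RR^{m-3}\times\RR_{\geq 0}^{n-m}$ with the factor $\RR_{\geq 0}^g$ then gives the claimed $\RR^{m-3}\times\RR_{\geq 0}^{n+g-m}$.

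The main obstacle I anticipate is the behavior along the boundary of $\Hdelr$ where some root parameters coincide: here the Newton polygon of the underlying rational curve $C_0$ degenerates (via the cut operation of Lemma \ref{coxdegen}), some tentacles of $C$ merge, and one must check that the section $C \mapsto C_0$ and the oval-area coordinates remain smooth up to these strata rather than only on the interior. This essentially requires the fact, already embedded in Proposition \ref{closedharnack} and the discussion following it, that Harnack curves with merging tentacles appear as smooth limits within the family, so the parametrization by $(\tilde{\rho}|_{\Hdelr})^{-1}$ together with the oval-area coordinates extends smoothly to the corresponding boundary faces of $\RR^{m-3}\times\RR_{\geq 0}^{n+g-m}$.
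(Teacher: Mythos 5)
Your proof follows essentially the same route as the paper: use Theorem \ref{theodiffeo} to pass from root parameters to tentacle positions, note that these are also defined for non-rational Harnack curves and are independent of the oval areas, and then combine Propositions \ref{rationalmoduli} and \ref{boundarypropo}. The one small point worth making explicit is that the \emph{existence} of the rational curve $C_0$ (i.e.\ that the tentacle positions of an arbitrary Harnack curve lie in $\tilde\rho(\Hdelr)$) comes from Proposition \ref{boundarypropo}, by contracting all ovals to zero area, rather than from Theorem \ref{theodiffeo} alone, which only gives uniqueness.
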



\section{The Compactification of $\Hdel$}
The goal of this section is to construct a natural compactification $\Hdelc$ of $\Hdel$ by collections of `patchworkable' Harnack curves.
\subsection{Abstract tropical curves}
We begin with a review of abstract tropical curves and of $\Mtrop$, the moduli space of tropical curves with $n$ legs and genus $g$. For details of this construction see \cite{caporaso2011algebraic}. 
\medskip

A \emph{weighted graph with $n$ legs} $\cG$ is a triple $(V,E,L,w)$ where 
\begin{itemize}
	\item $(V,E)$ is a perhaps non-simple connected graph, that is, we allow multiple edges and loops.
	\item $L: [n] \rightarrow V$ is a function which we think of as attaching $n$ labelled \emph{legs} at vertices of the graph. 
	\item $w$ is a function $V\rightarrow \NN$ which we call the \emph{weights} of the vertices.
\end{itemize}
The \emph{genus} of $\cG$ is the usual genus of $(V,E)$ plus the sum of the weights on all vertices; that is 
\[\text{genus}(G) = \sum_{v \in V} w(v) -|V|+|E|+1.\] 

An \emph{isomorphism} between two graphs $\cG_1 = (V_1,E_1,L_1,w_1)$ and $\cG_2 = (V_2,E_2,L_2,w_2)$ is a pair of bijections $\phi_V: V_1\rightarrow V_2$ and $\phi_E: E_1\rightarrow E_2$ such that 
\begin{itemize}
	\item For any edge $e\in E_1$ and any vertex $v\in V_1$, $\phi_E(e)$ is incident to $\phi_V(v)$ if and only if $e$ is incident to $v$.
	\item $L_2 = \phi_V(L_1)$.
	\item $w_1(v) = w_2(\phi_V)$.
\end{itemize}

Let $\cG/e$ denote the usual contraction of $\cG$ over an edge $e$ with the following change of weights: if we contract a non loop $ab$, then the contracted vertex gets weight $w(a)+w(b)$. If the contracted edge is a loop on $a$, then the weight of $a$ is increased by 1. Observe that the genus is invariant under contraction.

We say that a weighted graph $\cG$ is \emph{stable} if every vertex with weight $0$ has degree at least $3$ and every vertex with weight $1$ has positive degree. An \emph{(abstract) tropical curve} is a pair $(\cG,l)$ where $\cG$ is a stable weighted graph and $l$ is a function that assigns \emph{lengths} to the edges of $\cG$, in other words, $l$ is a function $l : E(\cG) \rightarrow \RR_{\geq 0}^{|E(\cG)|}$. The \emph{genus} of the tropical curve is the genus of $\cG$. An isomorphism between two abstract tropical curves $(\cG_1,l_1)$ and $(\cG_2,l_2)$ is an isomorphism $\phi$ of the weighted graphs $\cG_1$ and $\cG_2$ such that $l_1 = l_2 \circ \phi_E$, or such that one is the result of contracting an edge of length $0$ form the other, or the transitive closure of these two relations.

Given a weighted stable graph $\cG$, one can identify the space of all tropical curves over $\cG$ with $\RR_{\geq 0}^{|E(\cG)|}$. We define $\Mtrop(\cG) :=  \RR_{\geq 0}^{|E(\cG)|}$ and
\[
\Mtrop := \left.\left(\bigsqcup\limits_{\cG \text{ stable}} \Mtrop(\cG)  \right) \middle/ \sim \right. ,
\]
where $\sim$ denotes isomorphism. This is a connected Hausdorff topological space which parametrizes bijectively isomorphism classes of tropical curves. It is covered by $\Mtrop(\cG)$ where $\cG$ runs over all 3 valent graphs with all vertices of weight 0. For such graphs we have that $\Mtrop(\cG)$ is just $\RR^{3g+n-3}$. However $\Mtrop$ is not a manifold as there are triples of graphs of this form glued along codimension 1. 

To compactify this space we allow lengths to be infinite. Let $\RR_\infty = \RR_{\geq 0} \sqcup \{\infty\}$ be the one point compactification of $\RR_{\geq 0}$. An \emph{extended} tropical curve $(\cG,l)$ consists of a stable weighted graph $\cG$ and a length function $l : E(\cG) \rightarrow \RR_\infty^{|E(\cG)|}$. We define isomorphism classes of extended tropical curves in the same way as for tropical curves. This way we define $\overline{\Mtrop(\cG)} :=  \RR_\infty^{|E(\cG)|}$ and 
\[
\overline{\Mtrop} := \left. \left(\bigsqcup\limits_{\cG \text{ stable}} \overline{\Mtrop(\cG)}  \right) \middle/ \sim \right.
\]
This is a compact hausdorff space with $\Mtrop$ as an open dense subspace.

\subsection{Ronkin intercepts}
\label{sec:compact} 

From now on, lightly abusing notation, we call elements of $\Hdel$ curves and denote one of them $C$, even though by definition they are equivalence classes of Harnack curves. We say that a polynomial vanishes on $C$ if its zero locus is in the equivalence class $C$.

The expanded spines of different Harnack curves in the same equivalence class in $\Hdel$ only differ by translations. In particular, the combinatorial type and the lengths of the bounded edges remain the same. So given a curve $C\in\Hdel$, we have a well defined abstract tropical curve structure for its expanded spine $\spine(C)\in\Mtrop$: fix a labelling of the boundary segments of $\Delta$ by $[n]$ in a cyclical way and let $L(\spine(C))(k)$ be the vertex incident to the ray corresponding to the segment labelled $k$. This induces a map

\[
\spine : \Hdel \rightarrow \Mtrop.
\]

Recall that Ronkin intercepts are the coefficients of the tropical polynomial defining the expanded spine.

\begin{proposition}
\label{prop_ronkin}
The Ronkin intercepts modulo translations of the graph of the Ronkin function can be taken as global coordinates for $\Hdel$.
\end{proposition}
Notice that translations of the Ronkin function are the same as translations of the amoeba.
\begin{proof}
Since we proved in \Cref{boundarypropo} that composition of the maps in \Cref{eq:fixed_boundary} is a diffeomorphism, each of the maps themselves are diffeomorphisms. This implies that the bounded Ronkin intercepts can be taken as global coordinates for Harnack curves with fixed boundary.

Now, \Cref{theodiffeo} says that the positions of the amoeba tentacles can be taken as global coordinates for rational Harnack curves, and it is easy to recover the unbounded Ronkin intercepts from the positions of the tentacles as follows.
Let $\rho_i$ be the position of a tentacle. It corresponds to a segment in $\partial \Delta$ lying between two lattice points. Let $c_i$ and $c_{i+1}$ be the intercepts corresponding to those points. It is straightfoward that $\rho_i = c_i-c_{i+1}$. This implies that the map
\[
\begin{Bmatrix}
\text{Positions of}\\
\text{amoeba}\\
\text{tentacles}
\end{Bmatrix} /\RR^2
\rightarrow
\begin{Bmatrix}
\text{Unbounded}\\
\text{Ronkin}\\
\text{intercepts}/\RR^3
\end{Bmatrix}
\]
is a linear bijection when restricted to $\left\{\sum\limits_{i = 1}^n\rho_i= 0\right\}$. 
\end{proof}
\begin{proposition}
\label{prop:piecewise}
The map $\Hdel \to \Mtrop$ is a piecewise linear topological embedding.
\end{proposition}
\begin{proof}
By \Cref{prop_ronkin}, we can take the Ronkin intercepts as global coordinates of $\Hdel$. The Ronkin intercepts can be recovered from $\spine(C)$ and $\Delta$ up to translations of the amoeba. Computing the lengths of the bounded edges of a tropical curve from the tropical polynomial is again solving a system of linear equations. So, over every component $\Mtrop(\mathcal(G))$ of the co-domain, the map $\spine$ is linear.
\end{proof}




\subsection{Harnack meshes}
%


\begin{definition}
Let $B\subseteq M$ be a finite set of affine dimension 2. We define $\mathcal{H}_B$ the subset of $\mathcal{H}_{\Conv(B)}$ consisting of curves $C$ such that for every $v\in \Conv(B)_M$ the corresponding component $E_v$ in $\RR^2\setminus \AAA(C)$ exists if and only if $v\in B$.
\end{definition}
This is well defined since the existence of $E_v$ depends only on the equivalence class of a Harnack curve. 
By \Cref{modulitheorem}, $\mathcal{H}_B$ is diffeomorphic to $\RR^{|B|-3}$. 

\begin{definition}
\label{defn:mesh}
Consider a regular subdivision $\SSS$ of $\Delta$ with facets $\{B_1,\dots,B_s\}$ and let $\Delta_i = \Conv(B_i)$. A \emph{Harnack mesh over $\SSS$} is a collection of curves $(C_1,\dots,C_s)$ with $C_i\in \mathcal{H}_{B_i}$ such that there exists a polynomial $f$ with $f|_{\Delta_i}$ vanishing on $C_i$. 
We denote
\[\Hdel(\SSS) \subseteq \prod\limits_{i=1}^s  \mathcal{H}_{B_i}\]
for the space of all Harnack meshes over $\SSS$.
\end{definition}

Notice that $\Hdel$ is equal to the disjoint union of all $\Hdel(\SSS)$ where $\SSS$ has exactly one face, i.e., all $\SSS$ of the form $\{B\}$ with $\Delta = \Conv(B)$. 

The existence of such $f$ in the definition above is equivalent to the $C_i$ agreeing on their common boundary. That is, given $\Delta_i$ and $\Delta_j$ such that $\Gamma = \Delta_i \cap \Delta_j$, the distances between the tentacles of $C_i$ corresponding to $\Gamma$ are the same to the distances between the respective tentacles in $C_j$. 

Given any Harnack mesh $(C_1,\dots C_s) \in \Hdel(\SSS)$ we can define its expanded spine as an extended tropical curve. Let $\spine_i$ be the expanded spines of $C_i$. For each edge $\Delta_i\cap \Delta_j$, glue the expanded spines $\spine_i$ and $\spine_j$ by removing the legs corresponding to that edge and placing instead an edge of infinite length for each primitive segment in $\Delta_i\cap \Delta_j$ between the two vertices that were incident to the corresponding leg. The remaining legs are labelled by the boundary segments of $\Delta$. This way we have a map

\[
\spine_\SSS: \Hdel(\SSS) \rightarrow \overline{\Mtrop},
\]
which is an embedding, by \Cref{prop_ronkin}.

\begin{definition}
Let $\Delta$ be a lattice polygon. The \emph{compactified moduli space of Harnack curves} is
\[
\overline{\Hdel} := \bigsqcup_\SSS \Hdel(\SSS),
\]
where the union runs over all regular subdivisions $\SSS$ of $\Delta_M$. We give it the coarsest topology that makes the map
\[
\spine: \overline{\Hdel} \to  \overline{\Mtrop}
\]
defined by $\spine|_{\Hdel(\SSS)} := \spine_\SSS$, continuous.
\end{definition}
We will prove that this in fact is a compactification of $\Hdel$, i.e. a compact space where $\Hdel$ is dense.
\medskip

Harnack meshes are essentially collections of Harnack curves that can be patchworked into another Harnack curve, except that we allow singularities and non transversal intersection with the axes. We call a subdivision $\SSS$ of $\Delta$ is \emph{full} if it uses all the points, that is $\bigcup\limits_{B_i\in\SSS} B_i = \Delta_M$. So we can only do true patchworking with them whenever $\SSS$ full. 



\begin{proposition}
\label{harnack_patchwork}
Let $\SSS$ be a full regular subdivision of $\Delta$ with lifting function $h$ and let $(C_1,\dots,C_s)\in \Hdel(\SSS)$ be a Harnack mesh. Then there exist polynomials $f_1,\dots,f_s$ vanishing on $C_1,\dots,C_s$ such that the polynomial $f_t$ obtained by patchworking them vanishes on a Harnack curve.
\end{proposition}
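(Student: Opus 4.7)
The strategy is to invoke Viro's patchworking theorem to compute the real topology of the curve $C_t$ defined by $f_t$ for small $t>0$, and then verify that this topology is precisely the Harnack topology from Section 2.2, which by definition forces $C_t$ to be Harnack.

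First I would fix the setup: the polynomials $f_1,\dots,f_s$ coming from condition (2) of the Harnack mesh definition determine a unique coefficient $a_v$ for each $v\in \Delta_M$ (here fullness of $\SSS$ is essential, so that every lattice point of $\Delta$ lies in some facet $B_i$ and the coefficient is not left unspecified), so that $f_t(x) := \sum_{v\in \Delta_M} t^{h(v)} a_v x^v$ is well defined and has Newton polygon $\Delta$ for $t>0$. Each real curve $\RR C_i$ already has the Harnack topology inside $\RR X_{\Conv(B_i)}$: one distinguished component meeting each axis of $\RR X_{\Conv(B_i)}$ in the cyclic Harnack order, plus ovals and possibly isolated double points accounting for the interior lattice points of $\Conv(B_i)$.

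Next I would apply Viro's theorem to describe the pair $(\RR X_\Delta, \RR C_t)$ for $t\in(0,t_0]$ as the result of gluing the pairs $(\RR X_{\Conv(B_i)}, \RR C_i)$ along the axes identified with interior edges of $\SSS$, which is permissible because condition (2) of the Harnack mesh definition forces the boundary intersection patterns of adjacent pieces to coincide. I would then check combinatorially that the glued topology is the Harnack one: since $\SSS$ is connected, the distinguished components of the pieces link up into a single component of $\RR C_t$ that meets the axes $L_1,\dots,L_m$ of $\RR X_\Delta$ in the required cyclic order; the remaining components are either ovals (coming from ovals of the $\RR C_i$ or from smoothings of the transverse intersections produced along interior edges at interior lattice points) or isolated double points (coming from singular pieces, or from ovals that have contracted). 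A count based on fullness of $\SSS$ verifies that the total number of ovals plus isolated double points is exactly $g$, so after replacing each double point by a small circle we obtain the $g+1$ components demanded by the definition.

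The main obstacle is that the standard formulation of Viro's theorem recalled in the excerpt assumes each $C_i$ is smooth with transverse axis intersections, while elements of $\mathcal{H}_{\Conv(B_i)}$ may have ordinary isolated double points and missing boundary intersections. I would circumvent this in one of two ways: either perturb each $C_i$ inside $\mathcal{H}_{\Conv(B_i)}$ to a smooth representative, apply the smooth patchworking theorem to conclude Harnack for the perturbed patchworking, and then take the perturbation to zero using the closedness of Harnack curves (Proposition \ref{closedharnack}, whose full-dimensionality hypothesis on the Newton polygon is automatic here since the Newton polygon of $f_t$ stays equal to $\Delta$); or appeal to a refined patchworking statement that allows ordinary double points in the pieces and controls both the isotopy type of $\RR C_t$ and its singularity list, so that no singularities beyond those prescribed by the combinatorics of $\SSS$ and the $C_i$ can appear.
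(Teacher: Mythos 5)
Your proposal takes a genuinely different route from the paper's, and the difference is worth spelling out.

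The paper's argument does not attempt a direct combinatorial verification that the patchworked curve has Harnack topology. Instead it observes, first, that since $\SSS$ is full, each $C_i$ is automatically smooth and meets the axes transversally (for a full subdivision, the condition $C_i\in\mathcal{H}_{B_i}$ with $B_i=\Conv(B_i)_M$ means no ovals have contracted and no pair of tentacles has merged), so the hypotheses of Viro's theorem are met with no perturbation needed; your workarounds for singular pieces address a non-issue. Second, and this is the key move you are missing, the paper compares the patchworking of $(C_1,\dots,C_s)$ with Mikhalkin's T-curve construction of a Harnack curve, using a unimodular regular triangulation $T$ refining $\SSS$. Since the topological type of a Harnack curve is determined by $\Delta$ alone (Mikhalkin's rigidity theorem), the $(\RR^*)^2$-charts of each $C_i$ used in the patchworking agree, up to the $(\ZZ_2)^2$ action, with the charts appearing in Mikhalkin's construction restricted to the triangles of $T$ inside $\Conv(B_i)$. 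With a compatible sign choice the two patchworkings produce the same topological pair $(\RR X_\Delta, \RR C_t)$, and the latter is Harnack by Mikhalkin's result. This reduces the whole verification to two black boxes — uniqueness of the Harnack topological type, and Mikhalkin's T-curve theorem — neither of which needs to be reproved.

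By contrast, the step in your proposal that reads ``I would then check combinatorially that the glued topology is the Harnack one'' is exactly where the real work lies, and as written it is an assertion rather than an argument. Viro's theorem gives you $\RR C_t$ as a gluing of the charts of the $\RR C_i$ only after a choice of signs in each quadrant, and whether the distinguished components of the pieces link up into a single arc hitting $L_1,\dots,L_m$ in the prescribed cyclic order, whether the remaining components are all ovals, and whether the total count is $g+1$, all depend on that choice and on the combinatorics of $\SSS$. Verifying these directly is doable but is essentially re-deriving (a special case of) Mikhalkin's result; you would need to exhibit the correct signs and then do careful bookkeeping of components produced along the interior edges of $\SSS$. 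So the proposal is not wrong in spirit, but it leaves the central step unproved, whereas the paper discharges it by the comparison with the known T-curve construction.
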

\begin{proof}
$\SSS$ being full implies that each $C_i$ is smooth and non transversal in the boundary, so that they can be patchworked together.
The topological type of $C_i$ can be obtained by patchworking using a unimodular triangulation of $\Delta_i$ and the `Harnack' sign pattern $v\mapsto (-1)^{v_1v_2}$. By taking a unimodular triangulation of $\Delta$ that refines $\SSS$ and the Harnack sign pattern, the result of patchworking is a Harnack curve. It must have the same topological type as the patchwork using $\SSS$ and the curves $C_i$ when the polynomials $f_i$ are chosen with the same sign as the polynomial that results from patchworking $\Delta_i$ with the Harnack sign configuration.
\end{proof}

\begin{proposition}
\label{cell_dim}
Let $\SSS$ be a regular subdivision of $\Delta$. Then $\Hdel(\SSS)$ is diffeomorphic to $\RR^{n+g -\dim(\sigma(\SSS))}$ where $\sigma(\SSS)$ is the cone in the secondary fan of $\Delta$ corresponding to $\SSS$.
\end{proposition}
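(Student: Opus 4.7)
The plan is to realize $\Hdel(\SSS)$ as the quotient of an open subset of $\RR^{\Delta_M}$ by a natural group action coming from the torus actions on each cell, and then to identify the quotient with $\RR^{n+g-3-\dim\sigma(\SSS)}$.

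First I would observe that a Harnack mesh $(C_1, \ldots, C_s)$ over $\SSS$ is equivalent to the data of a single polynomial $f = \sum_{v\in \Delta_M} a_v x^v$ whose restriction $f|_{B_i}$ defines a curve $C_i \in \mathcal{H}_{\Conv(B_i)}$ for each $i$: condition (\ref{fcondition}) of the Harnack mesh definition is exactly that the $f_i$ arise as restrictions of one global polynomial on $\Delta_M$. Let $P(\SSS) \subseteq (\RR^*)^{\Delta_M}$ denote the set of such coefficient tuples with all $a_v \ne 0$. Using Lemma~\ref{coxdegen} iteratively to degenerate a curve from $\Hdel$ along the subdivision $\SSS$ shows $P(\SSS)$ is nonempty, and by the openness of the Harnack condition (Proposition~\ref{closedharnack}) $P(\SSS)$ is open in $(\RR^*)^{\Delta_M}$, so $\dim P(\SSS) = n+g$.

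Next I would identify the equivalence group. Two tuples in $P(\SSS)$ determine the same mesh iff for each cell $B_i$ the restrictions lie in the same torus orbit of $X_{\Conv(B_i)}$. In logarithmic coordinates this torus action shifts $\log|a_v|$ by an affine function $\phi_i(v) = \log \lambda_i + v\cdot \log t_i$ of the lattice point $v$, and consistency at shared lattice points $v \in B_i \cap B_j$ forces $\phi_i(v) = \phi_j(v)$. Hence the equivalence group $G(\SSS)$ is the space of continuous piecewise affine functions on $\SSS$, which coincides with the linear span of $\sigma(\SSS)$ inside $\RR^{\Delta_M}$. Since the lineality of the secondary fan is the $3$-dimensional space of affine functions on $\Delta_M$, we get $\dim G(\SSS) = 3 + \dim\sigma(\SSS)$, and the action is free on $P(\SSS)$ because each cell contains three non-collinear lattice points. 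This gives
\[
\dim \Hdel(\SSS) = \dim P(\SSS) - \dim G(\SSS) = n+g-3-\dim\sigma(\SSS).
\]

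To upgrade this count to a diffeomorphism with $\RR^{n+g-3-\dim\sigma(\SSS)}$, I would pick a linear complement $W$ to the span of $\sigma(\SSS)$ inside $\RR^{\Delta_M}$, restrict to a fixed sign-component of $P(\SSS)$ so that logarithms are defined, and show that the logarithmic parametrization identifies $\log P(\SSS) \cap W$ with all of $W \cong \RR^{n+g-3-\dim\sigma(\SSS)}$. Cell by cell, Theorem~\ref{modulitheorem} supplies coordinates on each $\mathcal{H}_{\Conv(B_i)}$ in the open stratum diffeomorphic to $\RR^{|B_i|-3}$ (boundary points plus oval areas), and matching of tentacle positions across internal edges of $\SSS$ is a linear condition in these coordinates, so the matched locus is an affine slice. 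The main obstacle will be showing that this slice covers all of $\RR^{n+g-3-\dim\sigma(\SSS)}$ rather than just an open subset; I would address this by a degeneration argument along $\sigma(\SSS)$: given any target $w \in W$, take a family of liftings $h_t$ with $\|h_t\|\to\infty$ inside $\sigma(\SSS)$ and $W$-component fixed at $w$, so that the associated Harnack curves (existing for all $t$ by Theorem~\ref{modulitheorem}) limit to a Harnack mesh realizing $w$, yielding surjectivity onto $W$.
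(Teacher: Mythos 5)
Your approach (realize $\Hdel(\SSS)$ as an open subset of the coefficient space modulo a group of cell-wise torus actions) is a genuinely different and more global route than the paper's, which computes both $\dim\sigma(\SSS)$ and $\dim\Hdel(\SSS)$ by an explicit cell-by-cell induction, adding one facet $B_i$ at a time and counting how many new height parameters and new curve parameters appear. Your reformulation is attractive for full subdivisions, where it works modulo minor details (the action of the finite group $\{\pm1\}^3 \subseteq \RR^*\times(\RR^*)^2$ may have nontrivial stabilizers, but that doesn't affect the dimension count).

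However, there is a real gap for non-full subdivisions, i.e.\ when $\bigcup_i B_i \subsetneq \Delta_M$. Two of your key claims fail there, and their failures happen to cancel in the final formula so the error is easy to miss. First, the identity ``the space of continuous piecewise affine functions on $\SSS$ coincides with the linear span of $\sigma(\SSS)$'' is false when $\SSS$ omits lattice points: for $v \in \Delta_M\setminus\bigcup_i B_i$ the height $h(v)$ may be varied freely above the envelope without leaving $\sigma(\SSS)$, so $\dim\operatorname{span}\sigma(\SSS) = 3 + d + |\Delta_M\setminus\bigcup_i B_i|$ while the space of piecewise affine functions on the underlying polyhedral subdivision has dimension only $3 + d$. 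Second, and correspondingly, $P(\SSS)$ is not open in $(\RR^*)^{\Delta_M}$ in this case. A curve $C_i \in \mathcal{H}_{B_i}$ (this is what the proposition needs; the $\mathcal{H}_{\Conv(B_i)}$ in the mesh definition is evidently a typo, as the paper's own proof and the dimension formula make clear) has Newton polygon $\Conv(B_i)$ and is defined by a polynomial that generically has \emph{nonzero} coefficients at all of $\Conv(B_i)_M$, not just at $B_i$; the constraint that the ovals/tentacle pairs indexed by $\Conv(B_i)_M\setminus B_i$ are degenerate is a codimension $|\Conv(B_i)_M\setminus B_i|$ condition, not captured by restricting the support of $f$. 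So either you parametrize by $f|_{B_i}$ and the Harnack condition on a polynomial of that sparse support is generically violated, or you parametrize by $f|_{\Conv(B_i)_M}$ and $P(\SSS)$ is a positive-codimension subset rather than an open set. These two corrections would have to be introduced and tracked, which is essentially what the paper does in the last paragraph of its proof.

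A smaller issue: the surjectivity argument at the end leans on degenerating a one-parameter family of Harnack curves along $\sigma(\SSS)$ and claiming the limit is a mesh realizing the target $w$. That is essentially the content of Proposition~\ref{harnack_patchwork} and the proof of Proposition~\ref{closure_prop}, which the paper establishes separately and \emph{after} Proposition~\ref{cell_dim}, so invoking it here risks circularity; you would need to argue convergence of the family directly.
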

\begin{proof}

First consider the case where $\SSS$ has a single facet $B$. By \Cref{modulitheorem}, $\Hdel(\SSS) = \mathcal{H}_B \cong \RR^{|B|-3}$ and the result follows from $\dim(\sigma(\SSS)) \allowbreak = n+g+3-|B|$.

Now let $\SSS$ be any full regular subdivision with facets $B_1,\dots,B_s$ and let $\Delta_i = \Conv(B_i)$. Let us compute $\dim(\sigma(\SSS))$. Note that for $h\in \sigma(\SSS)$, fixing $h$ for 3 affinely independent points of $B_i$ fixes $h$ on all of $B_i$. 
First, $\Delta_1$ is fixed after fixing 3 affinely independent points. Suppose $\Delta_2$ is adjacent to $\Delta_1$ and let $v_1$ be any element of $B_2\setminus B_1$. We have that $h(v_1)$ can take any positive real value. However, after fixing $h(v_1)$, all of $h|_{\Delta_2}$ is determined. Furthermore, if $B_i$ shares sides with both $B_1$ and $B_2$ then $f_i$ is also determined.

 Let $I_1 \subseteq [s]$ is the minimum set containing 1 and 2 and such that 
$\bigcup\limits_{i \in I_1} \Delta_i$
is a convex set. Then if $h$ is determined for $B_1$ and $v_1$, it is determined for all of $\bigcup\limits_{i \in I_1} B_i $. If $I_1 \neq [s]$, we can repeat last step; choose a vertex $v_2$ in a facet $F_i$ with $i\neq I_1$ but adjacent to a facet $F_j$ with $j\in I_1$. Further determining the value of $h$ on $v_2$ determines $h$ on a set of facets indexed by $I_2$. We can repeat this until $I_d = [s]$. By construction, $d+3 = \dim(\sigma(\SSS))$. We can reorder $[s]$ so that without loss of generality we can assume that $I_i =[k_i]$ for some $1\leq k_i \leq s$. If $\SSS$ is not full, then 
\[
\dim(\sigma(\SSS)) =d+3+|\Delta_M \backslash \bigcup_{i=1}^s B_i|,
\]
since the value of $h$ for points in $\Delta_M \backslash \bigcup\limits_{i=1}^s B_i$ can be arbitrary as long as it is large enough.

Now let us compute $\Hdel(\SSS)$. Recall that the global coordinates for $\Hdel$ that were taken in \Cref{modulitheorem} consist of areas of ovals, distances between consecutive parallel tentacles, and the position of the first tentacle of every edge after the second. 
We start with $B_1$ and we have that $\mathcal{H}_{B_1} \cong \RR^{|B_1|-3}$. The distances between the tentacles corresponding to the edge $\Gamma = \Delta_1\cap\Delta_2$ are the same for $C_1$ and $C_2$. So, these $|\Gamma|-2$ parameters of $\mathcal{H}_{B_2}$ are fixed if we fix $C_1$. The subspace of $\mathcal{H}_{b_2}$ of curves $C_2$ that agree with $C_1$ on the boundary is isomorphic to $\RR^{|B_2|-|\Gamma|-1}$. Let 
\[
q_i = \left|B_i \cap\left(\bigcup\limits_{j=1}^i B_i\right)\right|.
\]
Similarly, we have that the subspace of Harnack curves $C_i \in \mathcal{H}_{B_i}$ compatible with $C_1,\dots C_{i-1}$ is isomorphic to $\RR^{|B_i|-q_i-1}$ if $\Delta_i$ only shares an edge with one of $\Delta_1,\dots,\Delta_{i-1}$. If it shares edges with 2 of $\Delta_1,\dots,\Delta_{i-1}$, then there are $q_i-3$ distances between tentacles being fixed, so the subspace of Harnack curves $C_i \in \mathcal{H}_{B_i}$ compatible with $C_1,\dots C_{i-1}$ is isomorphic to $\RR^{|B_i|-|\Gamma|}$. The last statement also holds when $\Delta_i$ shares edges with 3 or more of $\Delta_1,\dots,\Delta_{i-1}$, since the position of the first tentacle of each edge after the second gets fixed. By construction, $B_i$ shares only one edge with the previous polygons if and only if $i =2$ or $i =k_j+1$, for $1\le j< d$. We have that 
\begin{align*}
\dim(\Hdel(\SSS)) &= \sum\limits_{i=1}^s |B_i|-3-d \\
&=  |\bigcup \SSS|-3-d	\\
&=  n+g - \dim(\sigma(\SSS)).
\end{align*}
\end{proof}

\begin{proposition}
\label{closure_prop}
Let $\Delta$ be a lattice polygon and $\SSS$ be a regular subdivision of $\Delta_M$. Then $\spine_\SSS(\Hdel(\SSS)) \subseteq \overline{\spine(\Hdel)}$, where $\overline{\spine(\Hdel)}$ is the closure of $\spine(\Hdel)$ in $\overline{\Mtrop}$.
\end{proposition}
\begin{proof}
Suppose $\SSS$ is a full subdivision of $\Delta$. Choose a height function $h\in\sigma(\SSS)$. By \Cref{harnack_patchwork}, for any Harnack mesh $\Mesh = (C_1,\dots,C_s)$ in $\Hdel(\SSS)$ there exists polynomials $f_1,\dots,f_s$ and $t_0>0$ such that for any $0<t<t_0$ the curve $C_t$ obtained by patchworking is in $\Hdel$. So, we have a path $(0,t_0)\rightarrow\Hdel$. For each facet $B_i$ of $\SSS$ there is a family of polynomials $\{f_t^i \enen t \in (0,t_0)\}$ with real coefficients such that $f_t^i$ vanishes on $C_t$ and every coefficient of $f_t^i$ outside $\Conv(B_i)$ goes to 0 as $t$ goes to 0. This follows from picking the height function $h^i$ affinely equivalent to $h$ such that points in $B_i$ have height 0 and doing patchworking with $h^i$.
The limit $\lim\limits_{t\to \infty} f_t^i = f_i$ vanishes on $C_i$. 

As $t$ goes to 0, the lengths of the edges of $\spine(C_t)$ that corresponds to the interior of $B_i$ tend to the lengths of the edges of $\spine(C_i)$. Doing this for every $i$ we have that all the finite lengths of $\spine_\SSS(\Mesh)$ agree with the lengths of $\lim \limits_{t\rightarrow 0}\spine(C_t)$. The edges going to infinity are precisely those dual to primitive segments of $\SSS$. Then, $\spine(C_t)$ forms a path $(0,t_0)\rightarrow \spine(\Hdel)$ such that the limit of this path when $t$ goes to 0 is $\spine(\Mesh)$. So, $\Mesh \in \overline{\Hdel}$.

If $\SSS$ is not full, let $\SSS'$ be the subdivision whose facets are $B' = \Conv(B)_M$ for each facet $B\in \SSS$. That is, $\SSS'$ is the finest full subdivision that coarsens $\SSS$. It is regular, as we can take a height function a height function in $\sigma(\SSS)$ and linearly extrapolate in each $\Delta_i$ to make it full. As the expanded spine is continuous, even when ovals contract, we have that 
\[
\spine_{\SSS'}(\Hdel(\SSS'))\subseteq \overline{\spine_\SSS(\Hdel(\SSS))} \subseteq \overline{\Hdel}.
\] 
\end{proof}


\begin{lemma}
\label{sobrey_lemma}
Let $\Delta$ be a lattice polygon. Then $\overline{\spine(\Hdel)} = \spine(\Hdelc)$.
\end{lemma}
\begin{proof}
Proposition \ref{closure_prop} implies that $\spine(\Hdelc) \subset \overline{\spine(\Hdel)}$.

For the other containment, let $C_1,C_2,\dots$ be a sequence of curves in $\Hdel$ such that their expanded spines converge to a point in $G\in\overline\Mtrop$. We call connected components of $G$ the components obtained by deleting from $G$ all edges of infinite length. Notice that vertices of an expanded spine correspond to polygons inside $\Delta$ given by the regular subdivision dual to the expanded spine. This association is carried on to the limit, so the connected components $G_1,\dots, G_s$ induce a regular subdivision $\SSS=\{B_1,\dots, B_s\}$ of $\Delta$. For a connected component $G_i$ of $G$, we can choose polynomials $f_1^i,f_2^i,\dots$ vanishing on $C_1,C_2\dots$ such that they converge to a polynomial $f^i$ which vanishes on a curve whose expanded spine is $G_i$. This can be done, for example, by picking a vertex of $G_i$ and fixing it to be in the origin, i.e., translating the amoebas of $C_1,C_2\dots$ so that the corresponding vertex in the expanded spine is always at the origin. Since the limit of Harnack curves is a Harnack curve (see \cite[Remark 2]{mikhalkin2001amoebas}), $f^i$ vanishes on a Harnack curve $C^i \in \mathcal{H}_{B_i}$. The collection $\Mesh = (C_1,\dots,C_s)$ is a Harnack mesh over $\SSS$ and we have that $\spine(\Mesh) = G$.
\end{proof}

\begin{corollary}
\label{coro_poset}
Let $\Delta$ be a lattice polygon. Then,
\[\overline{\Hdel(\SSS)} = \bigcup \limits_{\mathcal{T} \leq \SSS} \Hdel(\mathcal{T}),\]
where the union runs over all subdivisions $\mathcal{T}$ of $\Delta$ that refine $\SSS$.
\end{corollary}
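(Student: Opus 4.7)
The plan is to prove the two inclusions separately. The inclusion $\supseteq$ extends the patchworking construction of Proposition \ref{closure_prop} one level deeper, while $\subseteq$ combines Lemma \ref{sobrey_lemma} with continuity of edge lengths in $\overline{\Mtrop}$.

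For $\supseteq$: suppose $\mathcal{T}$ refines $\SSS$ and pick a Harnack mesh $\mathcal{M} = (D_1,\dots,D_r) \in \Hdel(\mathcal{T})$ with compatible defining polynomials. For each facet $B_i$ of $\SSS$, the facets of $\mathcal{T}$ contained in $\Conv(B_i)$ form a regular subdivision $\mathcal{T}_i$ of $\Conv(B_i)$, and the corresponding sub-collection of $\mathcal{M}$ is a Harnack mesh over $\mathcal{T}_i$. Applying Proposition \ref{harnack_patchwork} to each $\mathcal{T}_i$ (passing through its finest full refinement as in the proof of Proposition \ref{closure_prop} when $\mathcal{T}_i$ is not full) produces, for every sufficiently small $t>0$, a Harnack curve $C_i^t$ whose defining polynomial agrees on $\partial B_i$ with the polynomial inherited from $\mathcal{M}$. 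Compatibility of the $C_i^t$ along the edges of $\SSS$ is therefore inherited from the compatibility of $\mathcal{M}$ along the corresponding edges of $\mathcal{T}$, so $(C_1^t,\dots,C_s^t) \in \Hdel(\SSS)$; the argument of Proposition \ref{closure_prop} then gives $\spine_\SSS(C_1^t,\dots,C_s^t) \to \spine_\mathcal{T}(\mathcal{M})$ in $\overline{\Mtrop}$ as $t \to 0$, so $\mathcal{M} \in \overline{\Hdel(\SSS)}$.

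For $\subseteq$: let $p \in \overline{\Hdel(\SSS)}$. By Lemma \ref{sobrey_lemma} there exist a subdivision $\mathcal{T}$ and a mesh $\mathcal{M} \in \Hdel(\mathcal{T})$ with $p = \spine_\mathcal{T}(\mathcal{M})$, and we must show $\mathcal{T}$ refines $\SSS$. By the definition of $\spine_\SSS$, every point of $\spine_\SSS(\Hdel(\SSS))$ carries an infinite-length edge for each interior edge of $\SSS$ and, thanks to the continuous notion of spine from Definition \ref{def_spine}, a length-zero cycle at every lattice point of $\inti(\Delta) \setminus \bigcup \SSS$. Edge lengths are continuous $\RR_\infty$-valued functions on $\overline{\Mtrop}$, so both of these features persist in $p$. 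Reading them back through the definition of $\spine_\mathcal{T}$, the persistence of the infinite edges forces each facet of $\mathcal{T}$ to be contained in a facet of $\SSS$, while the persistence of the contracted ovals forces $\bigcup \mathcal{T} \subseteq \bigcup \SSS$. Together these are exactly the refinement relation $\mathcal{T} \leq \SSS$.

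The main obstacle I anticipate is the careful handling of non-full subdivisions, where Proposition \ref{harnack_patchwork} is not directly applicable; one must substitute each non-full $\mathcal{T}_i$ (or $\SSS$ itself) by the finest full subdivision with the same facet supports, as in the proof of Proposition \ref{closure_prop}, and check that infinite-edge and contracted-oval information is tracked consistently through this passage so that the bijective correspondence between closure-strata and refinements is preserved.
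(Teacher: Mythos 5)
The paper states this corollary without an explicit proof, treating it as an immediate consequence of Proposition \ref{closure_prop} and Lemma \ref{sobrey_lemma}; your two-inclusion argument spells out the intended route and uses the right ingredients. The $\subseteq$ direction is sound: since $\overline{\Hdel(\SSS)}\subseteq\Hdelc$, Lemma \ref{sobrey_lemma} gives $p=\spine_{\mathcal{T}}(\mathcal{M})$ for some subdivision $\mathcal{T}$ and mesh $\mathcal{M}$, and the persistence of infinite-length edges and of contracted cycles in the $\overline{\Mtrop}$-limit correctly pins down $\mathcal{T}\leq\SSS$.

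In the $\supseteq$ direction two things need tightening. First, a terminological slip: \emph{finest full refinement} should read \emph{finest full coarsening}, as in the proof of Proposition \ref{closure_prop} the auxiliary subdivision $\SSS'$ with facets $\Conv(B_i)_M\supseteq B_i$ is coarser than $\SSS$. Second, and more substantively, patchworking the sub-mesh over $\mathcal{T}_i$ (even after passing to a full coarsening) produces curves $C_i^t\in\mathcal{H}_{\Conv(B_i)_M}$ in which the ovals and tentacle-merges at the points of $\Conv(B_i)_M\setminus B_i$ are \emph{not} contracted; hence $(C_1^t,\dots,C_s^t)$ is a Harnack mesh over the full coarsening $\SSS'$ rather than over $\SSS$ itself, and your limit as written shows $\spine_{\mathcal{T}}(\mathcal{M})\in\overline{\Hdel(\SSS')}$, not $\overline{\Hdel(\SSS)}$. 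You anticipate this difficulty in your closing paragraph but do not actually resolve it. One way to close the gap is to run a simultaneous degeneration, letting the superfluous oval areas and tentacle gaps at $\Conv(B_i)_M\setminus B_i$ shrink to zero along with $t\to 0$; alternatively, factor $\mathcal{T}\leq\SSS$ through the intermediate subdivision $\mathcal{T}^{*}$ having the same geometric cells as $\SSS$ but using only the points of $\bigcup\mathcal{T}$, establish $\Hdel(\mathcal{T})\subseteq\overline{\Hdel(\mathcal{T}^{*})}$ by patchworking (now the used-point sets match) and $\Hdel(\mathcal{T}^{*})\subseteq\overline{\Hdel(\SSS)}$ by oval/tentacle contraction, and compose closures.
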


\begin{theorem}
\label{thm_compact}
Let $\Delta$ be a lattice polygon. The stratification of $\Hdelc$ by $\Hdel(\SSS)$ is a cell complex with a poset isomorphic to the face poset of the secondary polytope $\Sec(\Delta_M)$ given by its faces. 
\end{theorem}
\begin{proof}
The faces of $\Sec(\Delta_M)$ are in correspondance with regular subdivisions. By \Cref{cell_dim}, $\Hdel(\SSS)$ has the same dimension as the face of $\Sec(\Delta_M)$ corresponding to $\SSS$. By \Cref{coro_poset}, the boundary of $\overline{\Hdel(\SSS)}$ consists of $\Hdel(\mathcal{T})$ for every subdivision $\mathcal{T}$ that refines $\SSS$. Similarly, the faces contained in the face of $\Sec(\Delta_M)$ corresponding to $\SSS$ are those corresponding to refinements of $\SSS$.
\end{proof}

\begin{example}
\label{ex:hdelc}
\begin{figure}[h]
	\centering
		\includegraphics[width=0.75\textwidth]{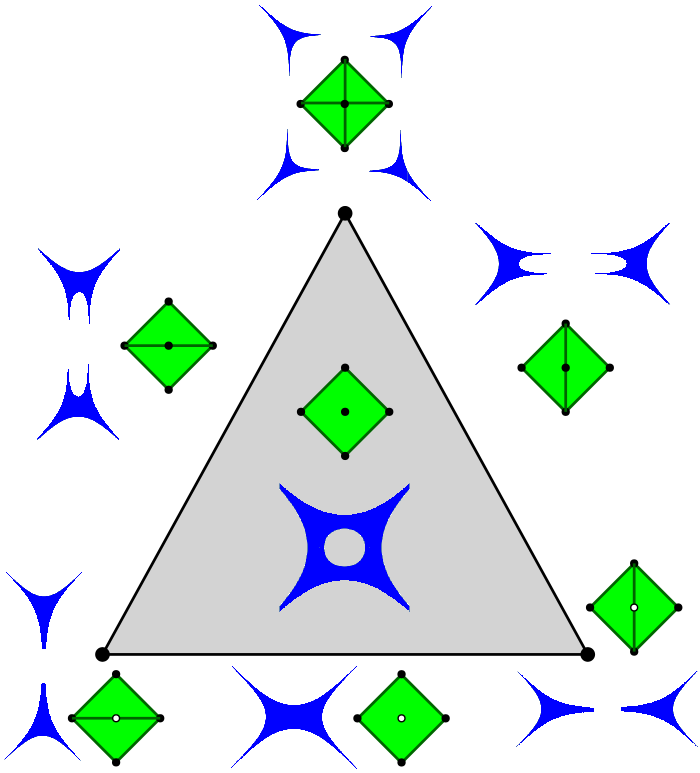}
	\label{fig:ejecomp}
	\caption{$\Hdelc$ for $\Delta = \Conv((1,0),(0,1),(-1,0),(0,-1))$}.
	\label{fig_hdelc}
\end{figure}

Let $\Delta := \Conv((1,0),(0,1),(-1,0),(0,-1))$. We have that $\Sec(\Delta)$ is a triangle. Figure \ref{fig_hdelc} shows the space $\Hdelc$ together with the subdivisions of the corresponding face in $\Sec(\Delta)$ and the amoebas of the corresponding Harnack meshes. The horizontal coordinate represents the relative position of the tentacles. This is parametrized, for example, by $\rho_1+\rho_3$. Going to the left stretches the amoeba vertically while going to the right stretches it horizontally. The vertical coordinate corresponds to the area of the oval, where going downwards decreases the area while going upwards increases it. The bottom open segment corresponds to $\Hdelr$, and that segment together with the interior face corresponds to $\Hdel \cong \RR\times \RR_{\geq0}$. 
\end{example}

\begin{remark}
The regular subdivision corresponding to the open cell on which a Harnack mesh $\Mesh$ sits in is not the same as the subdivision dual to the graph of $\spine(\Mesh)$.
In fact, the two subdivisions are not always comparable with the refinement order. 
For example, the subdivision corresponding to the bottom edge of \Cref{fig_hdelc} is the square without the middle point, but the subdivision dual to the graph of a spine in that cell would be the subdivision with 4 triangles.
\end{remark}

\section{Questions and future directions}
\subsection{$\Hdelc$ as a CW-complex}
\label{subsec:CW}
We begin by suggesting the following strengthening of \Cref{thm_compact}:

\begin{conjecture}
\label{conj:cw}
The compactified moduli space $\Hdelc$ is a CW-complex. 
\end{conjecture}

If \Cref{conj:cw} is true, then $\Hdelc$ is automatically regular by construction. Since regular CW-complexes with the same poset are isomorphic, by \Cref{thm_compact}, $\Hdelc$ would be isomorphic to $\Sec(\Delta_M)$. To show that $\Hdelc$ is CW is essentially showing that for any regular subdivision $\SSS$ of $\Delta$, $\spine_\SSS(\Hdel(\SSS))$ is a closed ball. 
It is enough to prove the following:

%

\begin{conjecture}
\label{conj:local}
Let $\Mesh\in \Hdel(\SSS)$ be a Harnack mesh and $\SSS'$ be a coarsening of $\SSS$, then there is a neighborhood of $\Mesh$ in $\overline{\Hdel(\SSS')}$ homeomorphic to a half space of dimension $\dim(\Hdel(\SSS'))$.
\end{conjecture}

Since the poset of $\Hdelc$ is Eulerian by \Cref{thm_compact}, \Cref{conj:local} implies that the closure of the cells of $\Hdelc$ are closed balls by (a reformulation of) Pointcar\'e's conjecture. This argument was recently used by Galashin Lam and Karp in order to prove that the positroid stratification of the totally non negative Grassmannian is a CW-complex \cite{GKL}. It is worth remarking that Harnack curves enjoy several similarities with the total positivity phenomenon (see, for example, \cite[Section 5.2]{kenyon2006dimers} or the proof \Cref{rank_prop}). In the next subsection we will see that \Cref{conj:local} holds when $\SSS$ is full.


\subsection{$\Hdelc$ as a manifold with generalized corners.}
The above discussion suggest to study topological charts of $\Hdel$. We can be more ambitious and try to endow $\Hdel$ with a smooth structure.
\Cref{modulitheorem} is already a description of $\Hdel$ as a smooth manifold with corners. 
A natural question is whether we can extend this smooth structure to $\Hdelc$.
A desirable trait of such a smooth structure (besides being compatible with the chart given by \Cref{modulitheorem}) is that the cell complex structure from \Cref{thm_compact} can be recovered from it.
However, secondary polytopes are not always simple polytopes 
and manifolds with corners lack the capacity to describe non-simple vertices. 
To mend this, we turn our attention to a wider category, namely that of manifolds with \emph{generalized corners}, or \emph{$gc$-manifolds}, as defined in \cite{gcmanifolds}.

\begin{definition}[\cite{gcmanifolds}]
A \emph{$g$-chart} of a topological space $X$ is a triple $(\phi, \LL, U)$ such that:
\begin{itemize}
	\item $\LL$ is a weakly toric monoid, i.e. a semi-lattice of the form $\LL = \ZZ^s\cap \sigma$ where $s$ is a positive integer and $\sigma\subseteq \RR^s$ is a rational polyhedral cone.
	\item $U$ is an open subset of $\homm(\LL, \RR_{\ge 0})$, i.e. the space of monoid morphisms from $\LL$ to the monoid $(\RR_{\ge 0},\cdot)$ with the weakest topology that makes evaluation on a point $q\in \LL$ continuous.
	\item $\phi:U \to X$ is a topological embedding to an open subset $\phi(U)\subseteq X$.
\end{itemize}
\end{definition}

We call $X$ a $gc$-manifold if it has a \emph{$g$-atlas}, that is, a collection of $g$-charts covering $X$ satisfying certain compatibility conditions on the transition functions. These conditions depend on the monoids, but we restrain from explaining them in detail in this paper to avoid overextension.
Easy examples of $gc$-manifolds are $\homm(\NN, \RR_{\ge 0}) \cong \RR_{\ge 0}$ and $\homm(\ZZ, \RR_{\ge 0}) \cong \RR$. 
So as a $gc$-manifold, $\Hdel\cong \homm(\NN^{m-3}\times\ZZ^{n+g-m}, \RR_{\ge 0})$ by \Cref{modulitheorem}.

\begin{proposition}
Let $\Mesh\in \Hdel(\SSS)$ be a Harnack mesh where $\SSS$ is a full subdivision. Then there exists a $g$-chart around $\Mesh$. Moreover, all charts provided this way are compatible with each other.
\end{proposition}

\begin{proof}
Since $(\RR\cup\{\infty\}, +)$ is isomorphic as a monoid to $(\RR_{\ge 0}, \cdot)$ with $x\mapsto e^{-x}$ as isomorphism, $g$-charts can be equivalently defined to be homeomorphisms from open subsets of `affine tropical toric varieties', i.e. from open subsets $U\subseteq \homm(\LL, \RR\cup\{\infty\})$.
Consider a graph $G$ embedded in $\RR^2$ that is dual to the subdivision $\SSS$. 
In particular, the edges of $G$ have a prescribed slope.
The lengths of these edges satisfy linear equations with integer coefficients given by the circuits of $G$ (two for each circuit). 
These equations are binomial relations under the isomorphism $(\RR\cup\{\infty\},+) \cong (\RR_{\ge0}, \cdot)$.
Thus, the edges of $G$ (which correspond to edges of $\SSS$ in the interior of $\Delta$) generate a toric monoid $\LL_\SSS$ under these relations.

Let $\Mesh'$ be a Harnack mesh close enough to $\Mesh$. The spine $\spine(\Mesh')$ has a subgraph $G_i$ which is very close to $\spine(C_i)$ for each curve $C_i\in \Mesh$. These subgraphs are glued together with edges of very large (possibly infinite) length. Contracting these subgraphs results in the graph dual to $\SSS$, so the distances between these graphs induce a homomorphism $\phi_{\Mesh'}: \LL_\SSS \to \RR\cup\{\infty\}$. 

The coordinates of a Harnack mesh in $\Hdel(\SSS)$ encode the same information as the spines of each curve in the Harnack mesh.
Since $\Mesh'$ is close enough to $\Mesh$ there exists a mesh $\Hdel(\SSS)$ that has a curve whose spine is isomorphic as metric graphs to $G_i$ for each $i$ (here we use that $\SSS$ is full). 
The coordinates of this mesh in $\Hdel(\SSS)$ is a vector in $\RR^d$, where $d=n+g-\dim(\sigma(\SSS))$, by \Cref{cell_dim}.
This vector induces a homomorphism $\psi_{\Mesh'}: \ZZ^d \to \RR_{\ge0}$.

The Harnack mesh $\Mesh'$ is completely determined by $\phi_{\Mesh'}$ and $\psi_{\Mesh'}$, 
so we obtain an embedding from a neighborhood of $\Mesh$ in $\Hdelc$ to $\homm(\LL_\SSS \times \ZZ^d, \RR_{\ge 0})$ given by
\[\Mesh'\mapsto \left((x,y) \mapsto e^{-\phi_{\Mesh'}(x)}\psi_{\Mesh'}(y)\right)\]
where $x\in \LL_\SSS$ and $y\in\ZZ^d$. 
Since $\homm(\LL_\SSS \times \ZZ^d, \RR_{\ge 0})$ is of the same dimension as $\Hdelc$, this mapping forms a $g$-chart. 

That this $g$-chart is compatible with the $g$-chart of $\Hdel$ given by \Cref{modulitheorem} is a consequence of \Cref{prop_ronkin}. Similarly, charts constructed this way are compatible with each other.


\end{proof}

\begin{corollary}
\Cref{conj:local} holds when $\SSS$ is full.
\end{corollary}
\begin{proof}
For any $\LL$, the space $\homm(\LL, \RR_{\ge 0})$ is stratified by its support. 
All of the strata are again of the form $\homm(\LL', \RR_{\ge 0})$ for some submonoid $\LL'\le \LL$ and are topological manifolds with boundary.  
The $g$-charts constructed above respects the cell strata of $\homm(\LL_\SSS\times \ZZ^d, \RR_{\ge 0})$ and $\Hdelc$, so the result follows. 
\end{proof}


Unfortunately, we do not know of a good way of constructing $g$-charts for points in cells corresponding to non-full subdivisions.
Since the edges of cycles corresponding to ovals contracting to a point have finite length, a chart the length of these edges would not have an open set as preimage.
One could expect a $g$-chart covering $\Hdel(\SSS)$ for a non-full subdivision $\SSS$ to be from an open subset of $\homm(\LL_\SSS\times\ZZ^d\times\NN^k, \RR_{\ge 0})$ where $k$ is the number of missing points of $\SSS$.
However, it is not clear what the coordinates corresponding to the copies of $\NN$ should be.

For example, using the area of ovals as coordinates, as in \Cref{modulitheorem}, does not work.
Consider the bottom right corner of \Cref{ex:hdelc}.
If we take a continuous path along the interior of the triangle by stretching the amoeba horizontally but maintaining the area of the bounded component is constant. 
Since the square bounded by the expanded spine is contained in the union of the amoeba with the bounded component of the complement, its area is bounded. 
Stretching the amoeba horizontally causes the length of the vertical edges of the square to tend to 0, which means that the path ends in the bottom right corner. 
This coordinate should be $0$ at this point, so the continuity is broken. 

\begin{question}
Is there a natural way of completing a $g$-atlas on $\Hdelc$ with $g$-charts respecting the cell strata?
\end{question}

A positive answer to this question implies a positive answer to the conjectures in \Cref{subsec:CW}.

\subsection{A cell complex for $T$-curves.} 

Harnack meshes can also be patchworked into non-Harnack curves by choosing polynomials with different sign patterns. The resulting curves are $T$-curves and conversely any $T$-curve arises this way. They can be thought of as the `neighborhood' of $\Hdelc$, which suggests the following question. 
\begin{question}
\label{quest_complex}
Given a lattice polygon $\Delta$, are there other topological types of curves in $X_\Delta$ such that their moduli space can be given a cell complex structure similar to $\Hdelc$? Can they be glued together to form a cell complex, or even a polytopal complex, where cells correspond to different topological types?
\end{question}

\begin{example}
When $\Delta$ is the unit square, $\Hdelc$ is a segment. When the Harnack meshes of the extremes are patchworked in a non Harnack way, we get a curve whose amoeba has a pinching (see \cite[Example 1]{mikhalkin2000real}). From one of the extremes the resulting expanded spine has a bounded edge parallel to $\{x_1=x_2\}$ and from the other extreme the edge is parallel to $\{x_1= -x_2\}$. When the length of the bounded edge goes to 0, both cases degenerate to a reducible curve (the union of two axis-parallel lines). In this case the complex of Question \ref{quest_complex} exists and it is isomorphic to the boundary of a triangle. 
\end{example}

\addtocontents{toc}{\protect\vspace*{\baselineskip}}

\addcontentsline{toc}{chapter}{Bibliography} 
\nocite{*} 
\bibliographystyle{alpha} 
\bibliography{biblomaestria} 

\begin{thebibliography}{DLRS10}

\bibitem[Bru15]{brugalle2014pseudoholomorphic}
Erwan Brugall\'e.
\newblock Pseudoholomorphic simple {H}arnack curves.
\newblock {\em Enseign. Math.}, 61(3-4):483--498, 2015.

\bibitem[Cap13]{caporaso2011algebraic}
Lucia Caporaso.
\newblock Algebraic and tropical curves: comparing their moduli spaces.
\newblock In {\em Handbook of moduli. {V}ol. {I}}, volume~24 of {\em Adv. Lect.
  Math. (ALM)}, pages 119--160. Int. Press, Somerville, MA, 2013.

\bibitem[CL18]{cretois2017vanishing}
R\'emi Cr\'etois and Lionel Lang.
\newblock The vanishing cycles of curves in toric surfaces {I}.
\newblock {\em Compos. Math.}, 154(8):1659--1697, 2018.

\bibitem[CLS11]{cox2011toric}
David~A. Cox, John~B. Little, and Henry~K. Schenck.
\newblock {\em Toric varieties}, volume 124 of {\em Grad. Stud. Math.}
\newblock Amer. Math. Soc., Providence, RI, 2011.

\bibitem[Dev99]{devadoss1999tessellations}
Satyan~L. Devadoss.
\newblock Tessellations of moduli spaces and the mosaic operad.
\newblock In {\em Homotopy invariant algebraic structures ({B}altimore,
  {MD},1998)}, volume 239 of {\em Contemp. Math.}, pages 91--114. Amer. Math.
  Soc., Providence, RI, 1999.

\bibitem[DLRS10]{de2010triangulations}
Jes\'us~A. De~Loera, J\"org Rambau, and Francisco Santos.
\newblock {\em Triangulations}, volume~25 of {\em Algorithms and Computation in
  Mathematics}.
\newblock Springer-Verlag, Berlin, 2010.
\newblock Structures for algorithms and applications.

\bibitem[FPT00]{forsberg2000laurent}
Mikael Forsberg, Mikael Passare, and August Tsikh.
\newblock Laurent determinants and arrangements of hyperplane amoebas.
\newblock {\em Adv. Math.}, 151(1):45--70, 2000.

\bibitem[GKL19]{GKL}
Pavel Galashin, Steven~N Karp, and Thomas Lam.
\newblock Regularity theorem for totally nonnegative flag varieties.
\newblock {\em arXiv preprint arXiv:1904.00527}, 2019.

\bibitem[GKZ94]{gelfand2008discriminants}
I.~M. Gelfand, M.~M. Kapranov, and A.~V. Zelevinsky.
\newblock {\em Discriminants, resultants, and multidimensional determinants}.
\newblock Mathematics: Theory \& Applications. Birkh\"auser Boston, Inc.,
  Boston, MA, 1994.

\bibitem[Gor72]{Gordon}
WB~Gordon.
\newblock On the diffeomorphisms of euclidean space.
\newblock {\em The American Mathematical Monthly}, 79(7):755--759, 1972.

\bibitem[Har76]{harnack1876ueber}
Axel Harnack.
\newblock Ueber die {V}ieltheiligkeit der ebenen algebraischen {C}urven.
\newblock {\em Math. Ann.}, 10(2):189--198, 1876.

\bibitem[IV96]{itenberg1996patchworking}
Ilia Itenberg and Oleg Viro.
\newblock Patchworking algebraic curves disproves the {R}agsdale conjecture.
\newblock {\em The Mathematical Intelligencer}, 18(4):19--28, 1996.

\bibitem[Joy16]{gcmanifolds}
Dominic Joyce.
\newblock A generalization of manifolds with corners.
\newblock {\em Adv. Math.}, 299:760--862, 2016.

\bibitem[Kho78]{khovanskii1978newton}
Askold~G Khovanskii.
\newblock Newton polyhedra and the genus of complete intersections.
\newblock {\em Functional Analysis and its applications}, 12(1):38--46, 1978.

\bibitem[KO06]{kenyon2006planar}
Richard Kenyon and Andrei Okounkov.
\newblock Planar dimers and {H}arnack curves.
\newblock {\em Duke Math. J.}, 131(3):499--524, 2006.

\bibitem[KOS06]{kenyon2006dimers}
Richard Kenyon, Andrei Okounkov, and Scott Sheffield.
\newblock Dimers and amoebae.
\newblock {\em Ann. of Math. (2)}, 163(3):1019--1056, 2006.

\bibitem[Lan15]{lang2015generalization}
Lionel Lang.
\newblock A generalization of simple {H}arnack curves.
\newblock {\em arXiv preprint arXiv:1504.07256}, 2015.

\bibitem[Mik00]{mikhalkin2000real}
G.~Mikhalkin.
\newblock Real algebraic curves, the moment map and amoebas.
\newblock {\em Ann. of Math. (2)}, 151(1):309--326, 2000.

\bibitem[Mik04]{mikhalkin2004amoebas}
Grigory Mikhalkin.
\newblock Amoebas of algebraic varieties and tropical geometry.
\newblock In {\em Different faces of geometry}, volume~3 of {\em Int. Math.
  Ser. (N. Y.)}, pages 257--300. Kluwer/Plenum, New York, 2004.

\bibitem[MO07]{mikhalkin2007geometry}
Grigory Mikhalkin and Andrei Okounkov.
\newblock Geometry of planar log-fronts.
\newblock {\em Mosc. Math. J.}, 7(3):507--531, 575, 2007.

\bibitem[MR01]{mikhalkin2001amoebas}
Grigory Mikhalkin and Hans Rullg{\aa}rd.
\newblock Amoebas of maximal area.
\newblock {\em Internat. Math. Res. Notices}, 2001(9):441--451, 2001.

\bibitem[PR04]{passare2004amoebas}
Mikael Passare and Hans Rullg{\aa}rd.
\newblock Amoebas, {M}onge-{A}mp\`ere measures, and triangulations of the
  {N}ewton polytope.
\newblock {\em Duke Math. J.}, 121(3):481--507, 2004.

\bibitem[PR11]{passare2010curvature}
Mikael Passare and Jean-Jacques Risler.
\newblock On the curvature of the real amoeba.
\newblock In {\em Proceedings of the {G}\"okova {G}eometry-{T}opology
  {C}onference 2010}, pages 129--134. Int. Press, Somerville, MA, 2011.

\bibitem[Ron74]{Ronkin}
L.~I. Ronkin.
\newblock {\em Introduction to the theory of entire functions of several
  variables}.
\newblock American Mathematical Society, Providence, R.I., 1974.
\newblock Translated from the Russian by Israel Program for Scientific
  Translations, Translations of Mathematical Monographs, Vol. 44.

\bibitem[Vir06]{viro2006patchworking}
Oleg Viro.
\newblock Patchworking real algebraic varieties.
\newblock {\em arXiv preprint arxiv:0611.382}, 2006.

\end{thebibliography}



\end{document}